\definecolor{MyLinkColor}{rgb}{0,0,0.4}
\newtheorem{thm}{Theorem}[section]
\newtheorem{prop}[thm]{Proposition}
\newtheorem{lemma}[thm]{Lemma}
\newtheorem{cor}[thm]{Corollary}
\newtheorem{defn}[thm]{Definition}
\theoremstyle{remark} 
\newtheorem{rem}[thm]{Remark}
\numberwithin{equation}{section}
\title[On the parabolicity of the Muskat problem]{On the parabolicity of the Muskat problem: Well-posedness, fingering,  and  stability results}
\subjclass[2000]{34C23; 	35B35;  35K55; 70K42}
\keywords{Classical solution, Steady-state solutions, Stability}
\author[Escher]{Joachim Escher}
\address{Institut f{\"u}r Angewandte Mathematik, Leibniz Universit{\"a}t Hannover, Welfengarten~1, 30167 Hannover, Germany. }
\email{escher@ifam.uni-hannover.de}
\author[Matioc]{Bogdan--Vasile Matioc}
\address{Institut f{\"u}r Angewandte Mathematik, Leibniz Universit{\"a}t Hannover, Welfengarten~1, 30167 Hannover, Germany. }
\email{matioc@ifam.uni-hannover.de}
\begin{document}

\begin{abstract}
We consider in this paper the  Muskat problem in a periodic geometry and incorporate  capillary as well as gravity effects in the modelling.
The problem is rewritten as an abstract evolution equation   and we use this property to prove  well-posedness of the problem and to
 establish  exponential stability  of some flat equilibrium.
 Using bifurcation theory we also find   finger shaped steady-states  which are all unstable.
\end{abstract}

\maketitle

\section{Introduction}
The Muskat problem was introduced into the mathematical literature by  M. Muskat \cite{M}
as a model for the simultaneous flow of  two immiscible fluids in a porous medium.
The problem is of considerable practical relevance since it is also a model for the  interaction between oil and water in an oil reservoir.
In practice high flow water injection pumps fill the bottom of the reservoir with  water to push the oil towards the wells like a piston.
Furthermore the injected water  is used to keep the  pressure  in the reservoir unchanged over a long term, which is  keeping the production rate constant.
From the practical point of view it may be of interest to determine the optimal speed at which  water drives the oil, which adheres more strongly to the  porous structure, upwards to the oil well.

In contrast to the  Mullins-Sekerka problem, where the interface separating the  two fluids is driven forward with normal velocity equal to the
normal component of the  velocity jump over the interface, cf. \cite{ES98}, the Muskat problem considers a continuous velocity field 
in the whole domain occupied by the two fluids.
The authors  in \cite{ES98} show  that the Mullins-Sekerka problem has a very natural parabolic structure
and solve it by using the  rich literature on this  topic.
A first result on the classical solvability of the  Muskat problem was
obtain in \cite{Y1}, where  local in time existence of classical solutions is established by using Newton's iteration method. 
Later on,  in \cite{Yi2}, the Schauder fixed point theorem  and existence results for a elliptic diffraction problem are used to prove 
existence of global solutions for small perturbations of some flat interface.  

The problem has been also considered by means of complex methods.
We like to mention the work \cite{SCH} where global in time existence  for initial data
that are  small perturbations of a flat interface are proved.
The authors of \cite{SCH} also show that the problem is ill-posed when the less-viscous fluid drives forward the fluid with higher viscosity.
Further local well-posedness results have been established \cite{Ambr} and more recently in \cite{CCG1, CCG2} by means of energy estimates in the absence of surface tension forces.

 Regarding the stability of equilibria an interesting result is found in \cite{FT} where  
the stability of some circular steady-state is proven by considering series expansions and surface tension effects at the boundary between the fluids. 
The authors of \cite{FT} state that  the  equilibrium  is not generally asymptotically stable.
As far as we know there are no other stability results for the  Muskat problem. 
Both, the Mullins-Sekerka and the Muskat problem are also closely related to fingering phenomena which have and still receive a lot of interest in many fields of  sciences.

On  the parabolicity of the Muskat problem not many results are available.
We show in this paper that the Muskat problem and the Mullins-Sekerka model are related in the sense that
the Muskat problem is, at least in a neighbourhood of some flat interface, of parabolic type too.
This observation is true, when considering surface tension effects, independently of the boundary data.
When allowing only for  gravity and viscosity
effects we need to make  restrictions to the  boundary data which are consistent with earlier results  mentioned above.
The approach proposed here  is not only a useful tool to prove the well-posedness of the problem but enables us also to use the  theory on parabolic problems to 
establish the dependence of the solutions on the initial data (see Theorem \ref{T:main}).
Furthermore, parabolic theory provides us a simple argument to prove  exponential stability of some flat equilibrium, cf. Theorem \ref{T:main2}.
Additionally,  Corollary \ref{C:1} yields an optimal value for the normal velocity at which water may displace  oil in a oil reservoir.
When considering surface tension effects, we find in Theorem \ref{T:P3}  for small values of the surface tension coefficient steady-state fingering solutions which are all unstable, cf. Theorem \ref{T:instability}. 
In the Appendix we show that the problem under certain constant boundary conditions corresponds to a  Muskat problem in a frame moving with constant velocity.

The outline of the paper is as follows. 
We start with a short description of the model of interest and present the well-posedness result in Section 2.
In order to establish this result, we transform in Section 3 the problem on a fixed domain by straightening the  unknown boundary 
and reduce the problem to an abstract evolution equation.
In Section \ref{S:6} we prove the asymptotic stability of some flat equilibrium which appears under certain constant boundary conditions.
In the following section
we find, using the surface tension coefficient as a bifurcation parameter, global bifurcation branches consisting only of unstable steady-state solutions of the problem.
In the Appendix we analyse the Muskat problem in a moving frame.

\section{Mathematical model and the well-posedness result}
To give a precise description of the physical setting we are interested in, consider a porous medium (or a vertical Hele-Shaw cell)
containing two incompressible and immiscible Newtonian fluids  in motion.
We use the subscript $-$ when referring to the fluid located in the lower part of the porous medium and which  occupies the domain $\Omega_-(t),$ 
respectively $+$ for the other one located in $\Omega_+(t)$.
These two fluid phases are separated by the interface $\Gamma(t)$ which evolves in time and is to be determined as a part of the problem.
The physically relevant problem is essentially three-dimensional, but it may be  approximated by a 
two-dimensional mathematical model.
We refer to \cite{KPS} for a deduction of a generalised Darcy's law for non-Newtonian fluids.
The methods presented there apply  also to Newtonian fluids and yield for a fluid in motion in a Hele-Shaw cell  the well-known  linear Darcy's law for a two-dimensional averaged problem.
\begin{figure}
\hspace{0.5cm}$$\includegraphics[scale=0.35]{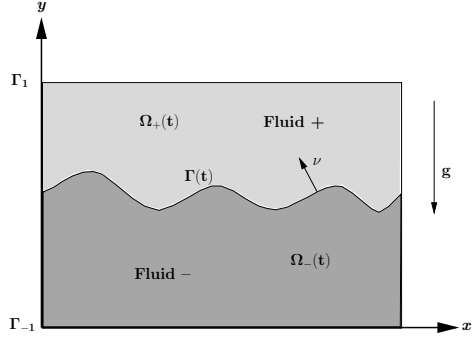}$$
\caption{The 2-D setting}
\end{figure}
This is the reason why we restrict our considerations to the situation $\Omega_\pm(t)\subset\mathbb{R}^2.$
The Muskat problem is a potential flow in the sense that the 
velocity fields $\vec v_\pm$ 
satisfy Darcy's law
\begin{subequations}\label{eq:P3}
\begin{equation}\label{eq:P3-1}
\vec  v_\pm=-\frac{k}{\mu_\pm}\nabla u_\pm \quad\text{in}\quad\Omega_\pm(t),
\end{equation}
cf. \cite{M}.
The potentials 
\begin{equation}\label{potentials}
u_\pm:=p_\pm +g\rho_\pm y\quad\text{in}\quad\Omega_\pm(t)
\end{equation}
incorporate both the dynamic pressure $p_\pm$ inside $\Omega_\pm(t)$ as well the hydrostatic term $g\rho_\pm y.$
 We  write $\rho_\pm$ to denote  the densities
of the fluids, $g$ is  the gravity constant, $k$ the permeability of the porous medium,
$\mu_\pm$ are the viscosity constants,  and we have chosen $y$ to be the height coordinate (see Figure 1.).
Incompressibility means that
\begin{equation}\label{eq:P3-2}
\nabla\cdot\vec v_\pm=0\quad\text{in}\quad\Omega_\pm(t).
\end{equation}
The pressure jump across the interface $\Gamma(t)$ is compensated by the surface tension in the interface.
From the Laplace-Young condition   we then obtain
\begin{equation}\label{eq:P3-3}
u_+-u_-=\gamma\kappa_{\Gamma(t)}+g(\rho_+-\rho_-)y\quad\text{on}\quad\Gamma(t),
\end{equation}
where   $\gamma$ is the surface tension 
coefficient of the interface, and   $\kappa_{\Gamma(t)}$ is the curvature of $\Gamma(t)$.
Two more, so-called kinematic boundary conditions on $\Gamma(t)$ are given (see also \cite{FT, M, Y1, Yi2}) by
 \begin{equation}\label{eq:P3-6}
V(t,\cdot)=\langle\vec v_\pm(t,\cdot),\nu(t,\cdot)\rangle\quad\text{on}\quad \Gamma(t),
\end{equation}
meaning that the interface moves along with the fluids (a particle on the boundary remains on the boundary as time elapses).
Here,  $\nu(t)$ stands for the  unit normal at $\Gamma(t)$ which points into $\Omega_+(t).$
In order to study the problem we are left to impose boundary conditions on the fixed interfaces $\Gamma_{-1}$ and $\Gamma_1.$
On the bottom $\Gamma_{-1}$ we presuppose the value of $u_-$ to be known, 
\begin{equation}\label{eq:P3-4}
 u_-=g_1(t,x)\quad\text{on}\quad \Gamma_{-1},
\end{equation}
and  on $\Gamma_1$ we set
\begin{equation}\label{eq:P3-5}
\partial_{\overline\nu} u_+=g_2(t,x)\quad\text{on}\quad \Gamma_1,
\end{equation}
where $\overline\nu=(0,1) $ is   the unit  normal vector at $\Gamma_1$ which points outward from $\Omega_+(t).$
Of course, it is also possible to choose Dirichlet conditions on $\Gamma_+$ and Neumann conditions on $\Gamma_{-1},$
 but we prefer to remain within the setting considered in  \cite{Y1, Yi2}.
The interface at time $t=0$ is also  prescribed:
\begin{equation}\label{eq:P3-7}
\Gamma(0)=\Gamma_0.
\end{equation}
\end{subequations}

System \eqref{eq:P3} is a two-phase moving boundary problem.
The main interest is in determining the motion of the interface $\Gamma(t)$ separating the fluids. 
If $\Gamma(t)$ is known, then the potentials $u_\pm$ can be then found by solving elliptic mixed boundary value problems. 
We shall prove that if the boundary $\Gamma_0$ is the graph of a  function which is small in some suitable Banach space, then, locally in time,
problem  \eqref{eq:P3} possesses a unique classical H\"older solution.
To that aim, we  first rewrite problem \eqref{eq:P3} in a more accessible way by introducing a parametrisation 
of the, a priori unknown, moving boundary $\Gamma(t).$

\subsection{Parametrising the boundary}

For simplicity, we consider in the following horizontally  $2\pi-$periodic flows only. 
Let $\alpha\in(0,1)$ be fixed in the remainder of this work. 
We define the set of admissible functions to be
\[
\mathcal{V}:=\{f\in h^{2+2{\mathop{\rm sign}\,}(\gamma)+\alpha}(\mathbb{S})\,:\, \|f\|_{C(\mathbb{S})}<1\},
\]
where ${\mathbb{S}}$ is the unit circle.
Note that when we neglect surface tension $\mathcal{V}\subset h^{2+\alpha}(\mathbb{S}),$ whereas for $\gamma>0$ we have $\mathcal{V}\subset h^{4+\alpha}(\mathbb{S}).$
The small H\"older space  $h^{m+\beta}(\mathbb{S})$, $m\in\mathbb{N}$ and $\beta\in(0,1)$, is defined as  the completion  of $C^\infty(\mathbb{S})$
in $C^{m+\beta}(\mathbb{S}),$ the classical H\"older spaces.
Furthermore, we set  $\Gamma_k:=\mathbb{S}\times\{k\}$ for $k\in\{-1,0,1\}$.

To incorporate time, we fix $T>0$. 
If  $f\in C([0,T], \mathcal{V})\cap C^1([0,T], h^{1+\alpha}(\mathbb{S})) $ describes the evolution of the interface 
separating fluids, then, at any time $t\in[0,T],$ we have that  
 $\Omega_\pm(t)=\Omega_\pm(f(t)),$ where, given $h\in\mathcal{V},$ we set
\begin{align*}
&\Omega_-(h):=\{(x,y)\in\Omega\,:\, -1<y<h(x)\}, \\[1ex]
& \Omega_+(h):=\{(x,y)\in\Omega\,:\, h(x)<y<1\}.
\end{align*}
Consequently, it holds that $\Gamma(t)=\Gamma(f(t))$ for all $t\in[0,T],$ with
\begin{align*}
&\Gamma(h):=\{(x,h(x))\,:\, x\in\mathbb{S}\},
\end{align*}
for all $h\in\mathcal{V}.$
Let us describe   the kinematic condition \eqref{eq:P3-6} in this particular context.
Consider the evolution of a  particle $(x(t),y(t))$ on the interface $\Gamma(f(t)).$ 
Since particles on the interface separating the fluids remain there as time evolves, we obtain from $\Gamma(t)=\Gamma(f(t))$, 
 that $y(t)=f(t,x(t))$ for all $t\in[0,T].$
Differentiating  this equation with respect to the time variable $t$ yields 
$\partial_t f=(-f',1)(x',y'),$ hence, the normal velocity $V$ satisfies the relation
 \[
V=\frac{\partial_tf}{\sqrt{1+f'^2}}.
\]
We denote  herein by $f'$ the spatial derivative of $f$.
Thereby, the local study of problem  \eqref{eq:P3} reduces to the following system
\begin{equation}\label{eq:S}
\left\{
\begin{array}{rllllll}
\Delta u_\pm&=&0&\text{in}& \Omega_\pm(f(t)), &0\leq t\leq T,\\[1ex]
\partial_y u_+&=&g_2&\text{on}& \Gamma_1,& 0\leq t\leq T,\\[1ex]
u_-&=&g_1&\text{on}&\Gamma_{-1},&0\leq t\leq T,\\[1ex]
u_+-u_-&=&\gamma\kappa_{\Gamma(f)}+\varpi  f&\text{on}& \Gamma(f(t)),&0\leq t\leq T,\\[1ex]
\partial_tf+k\mu_\pm^{-1}\sqrt{1+f'^2}\partial_\nu u_\pm&=&0 &\text{on}& \Gamma(f(t)),&0< t\leq T,\\[1ex]
f(0)&=&f_0, &
\end{array}
\right. 
\end{equation} 
where $f_0\in\mathcal{V}$ determines the initial shape of $\Gamma_0$, and to simplify notation, we set
\begin{equation}\label{eq:notation}
\varpi:=g(\rho_+-\rho_-). 
\end{equation}
Vice versa, defining $\vec v_\pm$ by \eqref{eq:P3-1},
  system \eqref{eq:P3} can be deduced from \eqref{eq:S}.
The functions $g_1$ and $g_2$ are assumed to satisfy
\[
\text{$g_1\in C([0,\infty), h^{2+\alpha}(\mathbb{S})) $ and $g_2\in C([0,\infty), h^{1+\alpha}(\mathbb{S})) $.}
\]

 A triple $(f,u_+,u_-) $ is called  a {\em classical H\"older solution of \eqref{eq:S}} on $[0,T]$ if 
\begin{align*}
&f\in C([0,T],\mathcal{V})\cap C^{1}([0,T],h^{1+\alpha}(\mathbb{S})),\\[1ex]
&u_\pm(t,\cdot)\in \mbox{\it buc}^{2+\alpha}(\Omega_\pm(f(t))),\quad t\in[0,T], 
\end{align*} 
and if $(f,u_+,u_-)$ satisfies \eqref{eq:S} pointwise. 
Given $f\in\mathcal{V},$
we let $\mbox{\it buc}^{2+\alpha}(\Omega_\pm(f))$ denote  the completion of the smooth functions $\mbox{\it BUC}\,^{\infty}(\Omega_\pm(f))$
in the Banach space $\mbox{\it BUC}\,^{2+\alpha}(\Omega_\pm(f))$.  
The first main result of this paper is the following theorem:
\begin{thm}[Local well-posedness]\label{T:main} Let $\gamma\in[0,\infty)$, $c_1, c_2\in\mathbb{R}$, and assume that
\begin{equation}\label{eq:cond}
\gamma>0\qquad\text{or}\qquad g(\rho_+-\rho_-)+c_2\left(\frac{\mu_-}{\mu_+}-1\right)<0.
\end{equation}
Then there exist open  neighbourhoods of the zero function $\mathcal{O}_i\subset h^{i+\alpha}(\mathbb{S})$, $i\in\{1,2\},$ and $\mathcal{O}\subset \mathcal{V}$,
 such that for all 
 $f_0\in \mathcal{O}$ and $g_i\in C([0,\infty), c_i+\mathcal{O}_i),$  $i=1,2,$
there exists $T(f_0)>0$ and a unique maximal H\"older   solution $(f=:f(\cdot;f_0),u_+,u_-)$ of problem \eqref{eq:S} on  $[0,T(f_0))$ which fulfills $f(t)\in\mathcal{O}$ for all $t\in[0,T(f_0)).$

If $g_1, g_2$ is of class $ C^{j}$,  then  
\[
\{(t,f_0)\,:\, \text{$f_0\in\mathcal{O}$ and $t\in(0,T(f_0))$}\}\to h^{2+2{\mathop{\rm sign}\,}(\gamma)+\alpha}(\mathbb{S}),\quad (t, f_0)\mapsto f(t; f_0)
\]
is of class $C^{j}.$

Furthermore, if $\gamma=0$, $g_1, g_2$ are constants and if
\begin{equation}\label{eq:cond2}
g(\rho_+-\rho_-)+g_2\left(\frac{\mu_-}{\mu_+}-1\right)>0,
\end{equation}
 then the linearised problem is ill-posed in the sense of Hadamard.
\end{thm}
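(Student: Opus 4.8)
\emph{Strategy and reduction.} The plan is to follow the programme of Section~3: first eliminate the potentials by solving the elliptic part of \eqref{eq:S}, so that \eqref{eq:S} becomes a single evolution equation for the height function $f$, and then exploit its parabolic character. Put $E_0:=h^{1+\alpha}(\s)$ and $E_1:=h^{2+2{\mathop{\rm sign}\,}(\gamma)+\alpha}(\s)$ (so $\V$ is an open neighbourhood of $0$ in $E_1$), and let $\cO_1,\cO_2$ be small neighbourhoods of $0$ in $h^{2+\alpha}(\s)$, respectively $h^{1+\alpha}(\s)$ --- the natural trace spaces for the Dirichlet datum $g_1$ and the Neumann datum $g_2$. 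Given $f\in\V$ and $g_1\in c_1+\cO_1$, $g_2\in c_2+\cO_2$, I solve the mixed elliptic transmission problem made of the first four lines of \eqref{eq:S} together with $\mu_+^{-1}\p_\nu u_+=\mu_-^{-1}\p_\nu u_-$ on $\Gamma(f)$ (this last identity being exactly the compatibility of the two kinematic equations in \eqref{eq:S}): straightening $\Omega_\pm(f)$ onto the fixed strips $\Omega_\pm(0)$ by a diffeomorphism depending smoothly on $f$ turns it into a linear elliptic transmission problem with $f$-dependent coefficients; uniqueness follows from an energy identity; and, since $\gamma\kappa_{\Gamma(f)}+\varpi f\in h^{2+\alpha}(\s)$ precisely when $f\in E_1$, Schauder theory yields $u_\pm=u_\pm(f;g_1,g_2)\in\mbox{\it buc}^{2+\alpha}(\Omega_\pm(f))$ depending analytically on $(f;g_1,g_2)$. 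With $\nu(f)=(-f',1)/\sqrt{1+f'^2}$, the remaining (upper) kinematic equation then reads
\[
\dot f=\Phi(f;g_1,g_2):=-k\mu_+^{-1}\big(\p_y u_+-f'\p_x u_+\big)\big|_{\Gamma(f)},
\]
and, for such data, \eqref{eq:S} is equivalent to $\dot f=\Phi\big(f;g_1(t),g_2(t)\big)$, $f(0)=f_0$, the potentials being recovered afterwards from $u_\pm(f(t);g_1(t),g_2(t))$. Here $\Phi\in C^\infty\big(\cO\times(c_1+\cO_1)\times(c_2+\cO_2),E_0\big)$ for a suitable open neighbourhood $\cO\subset\V$ of $0$, and $t\mapsto\Phi(f;g_1(t),g_2(t))$ is as regular in $t$ as $(g_1,g_2)$.

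\emph{Parabolicity --- the crux.} The key point is to show that $\p_f\Phi(0;c_1,c_2)$ generates a strongly continuous analytic semigroup on $E_0$ with domain $E_1$; write $\p_f\Phi(0;c_1,c_2)\in\kH(E_1,E_0)$ for this, and recall that $\kH(E_1,E_0)$ is open in the bounded operators $E_1\to E_0$. At $f=0$, $g_i\equiv c_i$ the reference potentials are affine, $u_+^0=c_2 y+\text{const}$ and $u_-^0=\frac{\mu_-}{\mu_+}c_2 y+\text{const}$, so the shape derivative $v_\pm:=\p_f u_\pm(0)[h]$ is harmonic on $\Omega_\pm(0)$, satisfies $\p_y v_+=0$ on $\Gamma_1$, $v_-=0$ on $\Gamma_{-1}$, and on $\Gamma_0$
\[
v_+-v_-=\gamma h''+\left(g(\rho_+-\rho_-)+c_2\left(\frac{\mu_-}{\mu_+}-1\right)\right)h,\qquad \mu_+^{-1}\p_y v_+=\mu_-^{-1}\p_y v_-,
\]
while $\p_f\Phi(0;c_1,c_2)[h]=-k\mu_+^{-1}\,\p_y v_+|_{y=0}$ (the variations of $\nu$ and of the domain drop out because $u_\pm^0$ is affine and $x$-independent). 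Solving this mode by mode in $n\in\Z$ --- with $\cosh(|n|(y-1))$ for $v_+$ and $\sinh(|n|(y+1))$ for $v_-$ --- identifies $\p_f\Phi(0;c_1,c_2)$ with the Fourier multiplier $m_0=0$ and, for $n\neq0$,
\[
m_n=\frac{k}{\mu_+}\cdot\frac{|n|\sinh|n|\cosh|n|}{\cosh^2|n|+\frac{\mu_-}{\mu_+}\sinh^2|n|}\cdot\left(-\gamma n^2+g(\rho_+-\rho_-)+c_2\left(\frac{\mu_-}{\mu_+}-1\right)\right).
\]
Since the first factor tends to $k/(\mu_++\mu_-)>0$ as $|n|\to\infty$, hypothesis \eqref{eq:cond} is exactly what makes the leading coefficient of $m_n$ negative, so $m_n\le-\delta(1+|n|)^{2+2{\mathop{\rm sign}\,}(\gamma)}+M$ for some $\delta,M>0$. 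I then deduce $\p_f\Phi(0;c_1,c_2)\in\kH(E_1,E_0)$ from resolvent estimates for this multiplier on the little Hölder scale (as for the fractional heat semigroup generated by $-(-\p_x^2)^{1+{\mathop{\rm sign}\,}(\gamma)}$ on $\s$); and, since $\kH(E_1,E_0)$ is open and $(f;g_1,g_2)\mapsto\p_f\Phi(f;g_1,g_2)$ is continuous, shrinking $\cO,\cO_1,\cO_2$ gives $\p_f\Phi(f;g_1,g_2)\in\kH(E_1,E_0)$ throughout. This last step --- Hölder-space, not $L^2$, resolvent bounds for a multiplier of this growth --- together with the careful bookkeeping of the curvature, normal and domain variations in the linearisation, is the main technical obstacle, and the only place where \eqref{eq:cond} is used.

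\emph{Well-posedness and regularity of the solution map.} With $\Phi$ smooth and $\p_f\Phi$ valued in $\kH(E_1,E_0)$, the problem $\dot f=\Phi(f;g_1(t),g_2(t))$, $f(0)=f_0$, is a non-autonomous quasilinear parabolic evolution equation of maximal Hölder regularity type; the corresponding abstract theory provides, for each $f_0\in\cO$, some $T(f_0)>0$ and a unique maximal solution $f=f(\cdot;f_0)$ in the regularity class required of a classical Hölder solution of \eqref{eq:S}, with $f(t)\in\cO$ on $[0,T(f_0))$. Recovering $u_\pm=u_\pm(f(t);g_1(t),g_2(t))$ from the first step makes $(f,u_+,u_-)$ the unique maximal classical Hölder solution of \eqref{eq:S}, uniqueness within the class of solutions staying in $\cO$ being inherited from the equivalence established there. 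The same abstract theory makes $f_0\mapsto f(\cdot;f_0)$ smooth; and if $g_1,g_2\in C^j$, so that the right-hand side is $C^j$ in $t$, a parabolic bootstrap upgrades this to: $(t,f_0)\mapsto f(t;f_0)\in E_1$ is of class $C^j$ on $\{(t,f_0):f_0\in\cO,\ 0<t<T(f_0)\}$.

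\emph{Ill-posedness of the linearisation.} Finally, if $\gamma=0$, $g_i\equiv c_i$ and \eqref{eq:cond2} holds, then $g(\rho_+-\rho_-)+g_2\left(\frac{\mu_-}{\mu_+}-1\right)>0$, and the multiplier of the second step becomes $m_0=0$ and $m_n\sim\frac{k}{\mu_++\mu_-}\left(g(\rho_+-\rho_-)+g_2\left(\frac{\mu_-}{\mu_+}-1\right)\right)|n|\to+\infty$. Hence the linearisation of the reduced equation at the flat configuration, $\dot v=\p_f\Phi(0)v$, carries the explicit modes $t\mapsto e^{m_n t}e^{inx}$; since $\sup_n m_n=+\infty$, no solution operator can be bounded on $h^{s}(\s)$ for any $s$ and any positive time, which is exactly Hadamard ill-posedness.
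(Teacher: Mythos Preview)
Your proof is correct and reaches the same endpoint as the paper --- the Fourier multiplier you compute coincides with the paper's Lemma~4.1 --- but the route is genuinely different and, in places, more economical.

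The paper keeps the two phases separate: it introduces solution operators $\mathcal{T}$ (lower problem, with the unknown $\partial_t f$ prescribed as Neumann datum on $\Gamma_0$) and $\mathcal{S}$ (upper problem, Dirichlet on $\Gamma_0$), so that $\partial_t f$ appears \emph{implicitly} on the right-hand side. This forces an extra step, Lemma~3.2, showing that the operator $\cG(f)=\id-\B_+(f)\kS_1(f)\tr\T_2(f)$ acting on $\partial_t f$ is invertible; the proof of that lemma is precisely solvability of the elliptic diffraction (transmission) problem~(3.8). Only after inverting $\cG(f)$ does one arrive at $\partial_t f=\Phi(t,f)$, and the linearisation in Section~4 is then obtained by differentiating each of $\B_+,\kS_i,\T_i,\cG^{-1}$ in $f$ and assembling the pieces term by term.

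You instead impose the compatibility $\mu_+^{-1}\partial_\nu u_+=\mu_-^{-1}\partial_\nu u_-$ as a transmission condition from the outset and solve the full diffraction problem at once, so $u_\pm=u_\pm(f;g_1,g_2)$ and $\Phi$ is explicit with no hidden inversion. In effect you have absorbed the content of the paper's Lemma~3.2 into the very definition of the reduced evolution. Your linearisation via shape derivatives, exploiting that the base potentials $u_\pm^0$ are affine and $x$-independent so that all domain and normal variations vanish, is considerably shorter than the paper's computation of $I_1,I_2,I_3,J_1,J_2,J_3$; the price is that one must be a little more careful to justify analytic dependence of the transmission solution on $f$ (which the paper gets by composing explicit inverses), but that is standard.

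One small slip to fix: the order of the multiplier is $1+2\,{\rm sign}(\gamma)$, not $2+2\,{\rm sign}(\gamma)$, so the correct bound is $m_n\le -\delta(1+|n|)^{1+2\,{\rm sign}(\gamma)}+M$, and the model generator you should invoke is of order $1+2\,{\rm sign}(\gamma)$ (e.g.\ $-(-\partial_x^2)^{(1+2\,{\rm sign}(\gamma))/2}$), matching $E_1=h^{2+2\,{\rm sign}(\gamma)+\alpha}$ over $E_0=h^{1+\alpha}$. This does not affect the argument.
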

 
 \begin{rem} If  $\gamma>0$ then we may choose $\mathcal{O}_i=h^{i+\alpha}(\mathbb{S})$, $i\in\{1,2\}.$
This means that we need no restriction on the boundary data when surface tension effects are considered.
 \end{rem}

Notice that if $g_1$ and $g_2$ depend only upon time and are constant in the spatial variable, then, 
for  $T>0$ small enough,  
\begin{equation}\label{eq:sim}
f(t):=-\frac{k}{\mu_+}\int_0^tg_2(s)\, ds,\qquad t\in[0,T],
\end{equation}
  is a solution of the problem (see also \cite{Yi2}).
 This is a flat interface which moves according to the sign of $g_2.$
Consider now the tuple  $(f,u_+,u_-) $ to be  an arbitrary solution of \eqref{eq:S}.
 Stokes'  theorem yields then 
 \begin{align*}
 \frac{\mu_+}{k}\frac{d}{dt}\int_\mathbb{S} f\,dx=-\int_\mathbb{S}\sqrt{1+f'^2}\partial_\nu u_+\, dx=-2\pi\int_{\Gamma_f}\partial_\nu u_+=-2\pi\int_\mathbb{S} g_2\, dx,
 \end{align*}
 which shows that the upper fluid moves towards the bottom if $g_2>0.$
If $g_2<0$  the fluid on the bottom must rise. 
This behaviour may be seen  very well also from  relation \eqref{eq:sim}.
Moreover, if $g_2$ has integral mean equal to zero, then the moving  interface 
oscillates around some constant value, since the formula above yields in this case
\[
\int_\mathbb{S} f\, dx=const.
\]
Summarising,  the direction of the flow is determined only by the boundary data $g_2$, while 
 $g_1$  is related only with the values of the potentials $u_\pm.$

\begin{rem} The result stated in Theorem \ref{T:main}  is consistent with previous results on this problem.
For example, if the densities of the fluids are equal, but the more viscous fluid drives forward the less viscous one
the problem is well-posed, cf. \cite{SCH, Y1, Yi2}.
Furthermore, if the  lower-viscosity fluid expands
into the higher-viscosity fluid, then the linearised problem is ill-posed, a result which is related with  the investigation in \cite{SCH}.
\end{rem}

Our analysis discloses that the problem is well-posed also when $\rho_-<\rho_+$ and $\mu_->\mu_+.$
In this case 
\[
c_2<\frac{g\mu_+(\rho_--\rho_+)}{\mu_--\mu_+}<0,
\]
thus the less dense fluid replaces the less viscous one, but the velocity of the flow must be sufficiently large.
Moreover, when $\rho_->\rho_+$ and $\mu_-<\mu_+,$ the problem is well-posed provided
\[
c_2>\frac{g\mu_+(\rho_--\rho_+)}{\mu_--\mu_+}.
\]
Notice that in this situation $c_2$ may be negative, which shows that the less viscous fluid may drive upwards the less dense one if the velocity of the system is small enough.
It is known that water is more dense than oil but  oil is more viscous  so that it adheres more strongly to the pores of the medium, cf. \cite{CC}.
Hence, this relation  determines also an optimal value of the normal velocity at which the water may replace the oil in an oil reservoir when surface tension effects are neglected.
Above this threshold value fingering may occur and water fingers could penetrate into the oil domain.
In view of \eqref{eq:P3-2} we obtain:

\begin{cor}\label{C:1} The optimal normal velocity at which oil is driven upwards by  water  in an oil reservoir, when surface tension effects are neglected, is
\begin{equation}\label{eq:optimal}
V_{opt}=\frac{gk(\rho_--\rho_+)}{\mu_+-\mu_-},
\end{equation}
if we identify the fluid on the bottom to be water and the one above as oil.
\end{cor}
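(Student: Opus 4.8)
The plan is to deduce Corollary~\ref{C:1} directly from Theorem~\ref{T:main} by specialising to constant boundary data, once the physically relevant identification of the two fluids has been fixed. Label the fluid on the bottom (subscript $-$) as water and the one above (subscript $+$) as oil; since water is denser while oil adheres more strongly to the porous structure, this amounts to $\rho_->\rho_+$ and $\mu_-<\mu_+$. Put $\gamma=0$ and take $g_1\equiv c_1$, $g_2\equiv c_2$ constant, so that by \eqref{eq:sim} the flat profile $f(t)=-(kc_2/\mu_+)t$ is a solution of \eqref{eq:S}.

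First I would express the normal velocity of this flat displacement in terms of $c_2$. Because $f'\equiv0$, the relation $V=\partial_tf/\sqrt{1+f'^2}$ coming from the kinematic condition \eqref{eq:P3-6} --- together with the incompressibility \eqref{eq:P3-2}, which via the divergence theorem ties the prescribed flux $g_2$ on $\Gamma_1$ to the rate of change of the volume occupied by the oil, hence to the height of the flat interface --- yields $V=\partial_tf=-kc_2/\mu_+$. Consequently water drives the oil upwards, i.e.\ the interface advances into $\Omega_+$, exactly when $V>0$, equivalently $c_2<0$, and in that regime $c_2=-\mu_+V/k$.

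Next I would rewrite the well-posedness hypothesis \eqref{eq:cond}. With $\gamma=0$ it reads $g(\rho_+-\rho_-)+c_2(\mu_-/\mu_+-1)<0$; multiplying by $\mu_+>0$ and dividing by $\mu_--\mu_+<0$ shows this is equivalent to $c_2>g\mu_+(\rho_--\rho_+)/(\mu_--\mu_+)$. Inserting $c_2=-\mu_+V/k$ and simplifying turns the latter inequality into $V<gk(\rho_--\rho_+)/(\mu_+-\mu_-)=V_{opt}$. Hence for every prescribed normal velocity strictly below $V_{opt}$ the hypotheses of Theorem~\ref{T:main} are met, the Muskat problem is well-posed, and the flat displacement persists under small perturbations of the data; so $V_{opt}$ is an upper bound for the admissible velocities. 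To see that the bound is sharp I would invoke the last assertion of Theorem~\ref{T:main}: when $\gamma=0$ and $g_1,g_2$ are constant, condition \eqref{eq:cond2} forces the linearised problem to be ill-posed in the sense of Hadamard, and under the same substitution $c_2=-\mu_+V/k$ inequality \eqref{eq:cond2} becomes $V>V_{opt}$. Thus $V_{opt}$ is precisely the critical normal velocity separating well-posedness from ill-posedness (above which fingering may occur), which is exactly the content of \eqref{eq:optimal}.

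I do not anticipate a genuine obstacle, since the whole argument reduces to Theorem~\ref{T:main}; the only delicate point is the bookkeeping of signs --- which fluid sits on top, the orientation of $\nu$ in \eqref{eq:P3-6}, and the sign of $c_2$ relative to the direction in which the interface propagates --- so that the phrase ``water pushes oil upwards'' gets matched with the well-posed branch $c_2>g\mu_+(\rho_--\rho_+)/(\mu_--\mu_+)$ rather than with its ill-posed complement.
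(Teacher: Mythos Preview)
Your argument is correct and follows essentially the same route as the paper: the corollary is stated immediately after the discussion that derives the threshold $c_2>g\mu_+(\rho_--\rho_+)/(\mu_--\mu_+)$ from \eqref{eq:cond} in the regime $\rho_->\rho_+$, $\mu_-<\mu_+$, and the phrase ``In view of \eqref{eq:P3-2}'' points to the same conversion $c_2\leftrightarrow V$ via the flat solution \eqref{eq:sim} and the Stokes/divergence computation that you carry out explicitly. Your additional appeal to the ill-posedness clause \eqref{eq:cond2} to justify sharpness is a natural complement that the paper only hints at (``Above this threshold value fingering may occur'').
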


Relation \eqref{eq:cond} was found also before,
 cf. \cite{ST}, when studying  the  Saffman-Taylor linearised stability of  the flat interface between two fluids in motion under small perturbations.

\section{The operator equation}
 
In order to prove Theorem  \ref{T:main} we first  transform the system \eqref{eq:S} into a problem on a fixed domain.
We  then use  certain solution operators for elliptic boundary value problems associated  to \eqref{eq:S} 
to rewrite the original problem as an abstract evolution equation.
Let us  introduce  some notation. 
We set $\Omega_\pm:=\Omega_\pm(0)$ 
and identify the boundary $\Gamma_0=\mathbb{S}\times\{0\}$  with the unit circle $\mathbb{S}$. 
For every $f\in\mathcal{V}$, we define the mappings $\phi_{ f\pm}=(\phi_{f\pm}^1,\phi_{f\pm}^2):\Omega_{\pm}\to\Omega_{\pm}(f)$
by 
\[
\phi_{f\pm}(x,y):=(x,y+(1\mp y)f(x)),\quad(x,y)\in\Omega_\pm.
\]
Note that   $\phi_{f\pm}$ are diffeomorphisms mapping the reference domains $\Omega_\pm$
 onto $\Omega_{\pm}(f),$ i.e. $\phi_{f\pm}\in\mbox{\it{Diff}}\,^{4+\alpha}(\Omega_\pm,\Omega_\pm(f)).$
 Moreover, $\phi_{f\pm}(\Gamma_{\pm1})=\Gamma_{\pm1},$
 $\phi_{f\pm}(\Gamma_0)=\Gamma(f)$, and the inverse mappings
 $\psi_{f\pm}$ are given by the expressions
\[
\quad\psi_{f\pm}(x,y):=\phi_{f\pm}^{-1}(x,y)=\left(x,\frac{y-f(x)}{1\mp f(x)}\right),\quad(x,y)\in\Omega_\pm(f).
\]

These diffeomorphisms induce  push-forward and pull-back operators 
\begin{eqnarray*}
&\phi_*^{f\pm}:\mbox{\it{buc}}\,^{2+\alpha}(\Omega_\pm)\to \mbox{\it{buc}}\,^{2+\alpha}(\Omega_\pm(f)), &v\mapsto v\circ\psi_{f\pm},\\[1ex]
&\phi_{f\pm}^*:\mbox{\it{buc}}\,^{2+\alpha}(\Omega_\pm(f))\to \mbox{\it{buc}}\,^{2+\alpha}(\Omega_\pm), &u\mapsto u\circ\phi_{f\pm},
\end{eqnarray*}
which are, in virtue of  the mean value theorem, isomorphisms, i.e. $(\phi_{f\pm}^*)^{-1}=\phi_*^{f\pm}.$
We now use  these  operators  to transform system 
\eqref{eq:S} on our fixed reference  domains $\Omega_\pm$.
The price to be paid for doing that is the fact that the differential operators we have to study are more involved.
Given $f\in\mathcal{V}$, we define the  operators $\mathcal{A}_\pm(f):\mbox{\it{buc}}\,^{2+\alpha}(\Omega_\pm)\to \mbox{\it{buc}}\,^{\alpha}(\Omega_\pm)$ by the relation
\[
 \mathcal{A}_\pm(f):=\phi_{f\pm}^*\Delta\phi_*^{f\pm}.
\]
The uniformly elliptic operators $\mathcal{A}_\pm(f)$ depend analytically on the a priori unknown function $f$, i.e.
\begin{equation}\label{eq10}
\mathcal{A}_\pm\in C^\omega(\mathcal{V}, \mathcal{L}(\mbox{\it{buc}}\,^{2+\alpha}(\Omega_\pm), \mbox{\it{buc}}\,^{\alpha}(\Omega_\pm))),
\end{equation} 
since we have the following representations of $\mathcal{A}_\pm$
\begin{eqnarray*}
\mathcal{A}_\pm(f)&=&\frac{\partial^2 }{\partial x^2}-2\frac{(1\mp y)f'}{1\mp f}\frac{\partial^2 }{\partial x\partial y}+
\left(\frac{(1\mp y)^2f'^2}{(1\mp f)^2}+\frac{1}{(1\mp f)^2}\right)\frac{\partial^2 }{\partial y^2}+\\[1ex]
&&-(1\mp y)\frac{(1\mp f)f''\pm 2f'^2}{(1\mp f)^2}\frac{\partial}{\partial y}\quad\text{for} \, f\in\mathcal{V}.
\end{eqnarray*}
Indeed, $\mathcal{A}_\pm(f)$ is a linear combination  of differential operators of order less or equal  to $2$
 with coefficients depending analytically on  
$f$.  
Furthermore, $\mathcal{A}_\pm(f)$ have the following geometric interpretation.
Let $\eta$ be  the standard metric on $\mathbb{R}^2.$ 
The diffeomorphisms $\phi_{f\pm}$ induce  Riemannian metrics, $\phi^*_{f\pm}\eta,$  
on $\Omega_\pm,$ i.e.
\[
\phi^*_{f\pm}\eta\big|_x(\xi,\zeta):=\eta\big|_{\phi_{f\pm}(x)}({\partial} \phi_{f\pm}(x)\xi,{\partial}\phi_{f\pm}(x)\zeta)
=\xi^T(\phi_{f\pm}(x))^T{\partial} \phi_{f\pm}(x)\zeta
\]
for $x\in\Omega_\pm$ and tangent vectors $\xi, \zeta\in T_x\Omega_\pm.$
With this notation, $\mathcal{A}_\pm(f)$ are exactly the Laplace-Beltrami operators corresponding to the Riemannian manifolds $(\Omega_\pm, \phi^*_{f\pm}\eta),$
respectively.
Furthermore, given $f\in\mathcal{V},$ we define the boundary operators $\mathcal{B}_\pm(f):\mbox{\it{buc}}\,^{2+\alpha}(\Omega_\pm)\to h^{1+\alpha}(\mathbb{S}),$
\begin{align*}
\mathcal{B}_\pm(f)v_\pm:=k\mu_\pm^{-1}{\mathop{\rm tr}\,} \phi_{f\pm}^*\langle \nabla(\phi_*^{f\pm}v_\pm)|(-f',1)\rangle,\quad v_\pm\in \mbox{\it{buc}}\,^{2+\alpha}(\Omega_\pm),
\end{align*}
where ${\mathop{\rm tr}\,}$ is the trace operator with respect to $\mathbb{S},$ i.e. ${\mathop{\rm tr}\,} v_\pm(x):=v_\pm(x,0)$ for all $x\in\mathbb{S}$
and $v_\pm\in\mbox{\it{BUC}}\,(\Omega_\pm).$
A simple computation shows 
\[
\mathcal{B}_\pm(f)v_\pm=\frac{k}{\mu_\pm}\left(\frac{1+f'^2}{1\mp f}{\mathop{\rm tr}\,} \frac{{\partial} v_\pm}{\partial y}-f'{\mathop{\rm tr}\,} \frac{{\partial} v_\pm}{\partial x}\right),
\]
so that $\mathcal{B}_\pm$ depend analytically on $f$, i.e.
\begin{equation}\label{anaB}
\mathcal{B}_\pm\in C^\omega(\mathcal{V}, \mathcal{L}(\mbox{\it{buc}}\,^{2+\alpha}(\Omega_\pm), h^{1+\alpha}(\mathbb{S}))).
\end{equation}
Finally, let $\mathcal{B}(f):\mbox{\it{buc}}\,^{2+\alpha}(\Omega_+)\to h^{1+\alpha}(\mathbb{S})$ be given by
\begin{align*}
\mathcal{B}(f)v_+:=\frac{1}{1-f}{\mathop{\rm tr}}_1\partial_yv_+, \quad v_+\in \mbox{\it{buc}}\,^{2+\alpha}(\Omega_+),
\end{align*}
with ${\mathop{\rm tr}}_1$ the trace operator with respect to $\Gamma_1, $ that is  ${\mathop{\rm tr}}_1 v_+(x):=v_+(x,1)$ for all $x\in\mathbb{S}$
and $v_+\in\mbox{\it{BUC}}\,(\Omega_+).$
Clearly, we also have
\begin{equation}\label{anaB1}
\mathcal{B}\in C^\omega(\mathcal{V}, \mathcal{L}(\mbox{\it{buc}}\,^{2+\alpha}(\Omega_+), h^{1+\alpha}(\mathbb{S}))).
\end{equation}

Letting $v_\pm=\phi_{f\pm}^* u_\pm$, we see that problem \eqref{eq:S} is equivalent to  the following system
\begin{equation}\label{eq11}\left\{
\begin{array}{rllllll}
\mathcal{A}_\pm(f) v_\pm&=&0&\text{in}& \Omega_\pm,&0\leq t\leq T,\\[1ex]
\mathcal{B}(f)v_+&=&g_2&\text{on}& \Gamma_1,&0\leq t\leq T,\\[1ex]
 v_-&=&g_1&\text{on}&\Gamma_{-1},&0\leq t\leq T,\\[1ex]
v_+-v_-&=&\gamma \kappa(f)+\varpi f&\text{on}& \Gamma_0,&0\leq t\leq T,\\[1ex]
\partial_t f+\mathcal{B}_\pm(f)v_\pm&=&0 &\text{on}& \Gamma_{0},&0< t\leq T,\\[1ex]
f(0)&=&f_0,&&
\end{array}
\right.
\end{equation}
where $\kappa: h^{4+\alpha}(\mathbb{S})\to h^{2+\alpha}(\mathbb{S}),$  with
\[
\kappa(f)=\frac{f''}{(1+f'^2)^{3/2}}\quad\text{for}\quad f\in h^{4+\alpha}(\mathbb{S}),
\]
is the pulled-back curvature operator.
Obviously, $\kappa$ depends analytically on $f$ as well, i.e. $\kappa\in C^\omega(h^{4+\alpha}(\mathbb{S}),h^{2+\alpha}(\mathbb{S})).$
The notion of classical solution for \eqref{eq11} is defined similarly to that for \eqref{eq:S}.
The systems \eqref{eq:S} and \eqref{eq11} are equivalent in the following sense:

\begin{lemma}\label{L:equivalent} Let $(f,u_+,u_-)$ be   solution of \eqref{eq:S}. 
The tuple $(f,\phi_{f+}^*u_+,\phi_{f-}^*u_-)$ is then a  solution to \eqref{eq11}.
Vice versa, if $(f,v_+,v_-)$ is a classical solution  of  \eqref{eq11}, then $(f,\phi_*^{f+}v_+,\phi_*^{f-}v_-)$
 is a classical solution of \eqref{eq:S}.
\end{lemma}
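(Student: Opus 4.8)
The plan is to verify that the change of variables $v_\pm = \phi_{f\pm}^* u_\pm$ (equivalently $u_\pm = \phi_*^{f\pm}v_\pm$) transports each of the six lines of \eqref{eq:S} to the corresponding line of \eqref{eq11} and conversely, using only the definitions of $\mathcal{A}_\pm(f)$, $\mathcal{B}_\pm(f)$, $\mathcal{B}(f)$, $\kappa$ already introduced, together with the fact that $\phi_{f\pm}^*$ and $\phi_*^{f\pm}$ are mutually inverse isomorphisms between the relevant $\mbox{\it{buc}}$-spaces. First I would record the elementary but essential regularity bookkeeping: since $f\in\mathcal{V}$ and $\phi_{f\pm}\in\mbox{\it{Diff}}\,^{4+\alpha}(\Omega_\pm,\Omega_\pm(f))$, the pull-back $u_\pm\mapsto u_\pm\circ\phi_{f\pm}$ maps $\mbox{\it{buc}}\,^{2+\alpha}(\Omega_\pm(f))$ isomorphically onto $\mbox{\it{buc}}\,^{2+\alpha}(\Omega_\pm)$, so $u_\pm(t,\cdot)\in\mbox{\it{buc}}\,^{2+\alpha}(\Omega_\pm(f(t)))$ holds iff $v_\pm(t,\cdot)\in\mbox{\it{buc}}\,^{2+\alpha}(\Omega_\pm)$; the function $f$ itself is untouched by the transformation, so the time-regularity class $f\in C([0,T],\mathcal{V})\cap C^1([0,T],h^{1+\alpha}(\mathbb{S}))$ and the initial condition $f(0)=f_0$ transfer verbatim.

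Next I would go line by line. For the interior equations, by definition $\mathcal{A}_\pm(f) = \phi_{f\pm}^*\Delta\phi_*^{f\pm}$, hence $\mathcal{A}_\pm(f)v_\pm = \phi_{f\pm}^*\big(\Delta(\phi_*^{f\pm}v_\pm)\big) = \phi_{f\pm}^*(\Delta u_\pm)$; since $\phi_{f\pm}^*$ is injective, $\Delta u_\pm = 0$ in $\Omega_\pm(f)$ is equivalent to $\mathcal{A}_\pm(f)v_\pm = 0$ in $\Omega_\pm$. For the Neumann condition on $\Gamma_1$, I would use that $\phi_{f+}$ fixes $\Gamma_1$ pointwise and that $\overline\nu=(0,1)$, so the definition of $\mathcal{B}(f)$ (or directly its explicit form $\mathcal{B}(f)v_+ = \frac{1}{1-f}{\mathop{\rm tr}}_1\partial_y v_+$, obtained from the chain rule $\partial_y u_+ = \frac{1}{1-f}\partial_y v_+$ on $\Gamma_1$) gives $\partial_{\overline\nu}u_+ = g_2$ on $\Gamma_1$ iff $\mathcal{B}(f)v_+ = g_2$ on $\Gamma_1$. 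Since $\phi_{f-}$ fixes $\Gamma_{-1}$ pointwise, the Dirichlet condition $u_-=g_1$ on $\Gamma_{-1}$ is literally $v_-=g_1$ on $\Gamma_{-1}$. The Laplace--Young line transports because $\phi_{f\pm}(\Gamma_0)=\Gamma(f)$, so the traces of $u_\pm$ on $\Gamma(f)$ equal the traces of $v_\pm$ on $\Gamma_0$, while the geometric curvature $\kappa_{\Gamma(f)}$ composed with the graph parametrisation equals $\kappa(f)=f''/(1+f'^2)^{3/2}$ by definition of the pulled-back curvature operator, and the hydrostatic term is $g(\rho_+-\rho_-)y = \varpi f$ on $\Gamma(f)$. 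Finally, for the kinematic line, I would observe that the normal $\nu$ at $\Gamma(f)$ is $(1+f'^2)^{-1/2}(-f',1)$, so $\sqrt{1+f'^2}\,\partial_\nu u_\pm = {\mathop{\rm tr}\,}\langle\nabla u_\pm\,|\,(-f',1)\rangle$ along $\Gamma(f)$; pulling back via $\phi_{f\pm}$ and multiplying by $k\mu_\pm^{-1}$ reproduces exactly the definition of $\mathcal{B}_\pm(f)v_\pm$, whence $\partial_t f + k\mu_\pm^{-1}\sqrt{1+f'^2}\,\partial_\nu u_\pm = 0$ is equivalent to $\partial_t f + \mathcal{B}_\pm(f)v_\pm = 0$ on $\Gamma_0$.

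The converse direction is immediate once the forward identities are established, since every step above is an equivalence: given a classical solution $(f,v_+,v_-)$ of \eqref{eq11}, set $u_\pm = \phi_*^{f\pm}v_\pm$; the isomorphism property places $u_\pm(t,\cdot)$ in $\mbox{\it{buc}}\,^{2+\alpha}(\Omega_\pm(f(t)))$, and reading the same chain of identities backwards shows $(f,u_+,u_-)$ satisfies \eqref{eq:S} pointwise.

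I do not expect a genuine obstacle here; the proof is a bookkeeping verification, and the only mild care needed is in the kinematic boundary condition, where one must be attentive to the non-normalised conormal $(-f',1)$ and to the fact that the factor $\sqrt{1+f'^2}$ in \eqref{eq:S} is precisely what cancels the normalisation of $\nu$, so that $\mathcal{B}_\pm(f)$ as defined (with the untruncated vector $(-f',1)$) is the correct transported operator. A secondary point worth stating explicitly is that the push-forward and pull-back commute with the trace operators because the diffeomorphisms $\phi_{f\pm}$ restrict to the identity on $\Gamma_{\pm1}$ and map $\Gamma_0$ onto $\Gamma(f)$ via $x\mapsto(x,f(x))$; once this is recorded, all the boundary lines follow without further computation.
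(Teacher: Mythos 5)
Your proposal is correct and takes essentially the same approach as the paper: the paper's proof consists precisely of the observation that the only nontrivial point is the equivalence $u_\pm\in\mbox{\it buc}\,^{2+\alpha}(\Omega_\pm(f))\iff v_\pm\in\mbox{\it buc}\,^{2+\alpha}(\Omega_\pm)$ (delegated to a mean-value-theorem argument in an earlier reference), after which the line-by-line transport of the six equations is treated as routine. You identify the same key point in your first paragraph and then supply the routine verification the paper omits; the only mild difference in emphasis is that the paper spends its one sentence on the $\mbox{\it buc}$-regularity equivalence and says nothing about the boundary conditions, while you devote most of your space to the latter and assert the former.
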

\begin{proof}
The main difficulty lies in showing that, if $f\in\mathcal{V}$, then   $u_\pm\in \mbox{\it buc}\,^{2+\alpha}(\Omega_\pm(f))$ exactly when
$v_\pm\in \mbox{\it buc}\,^{2+\alpha}(\Omega_\pm).$
However, this equivalence  can be proved by using the mean value theorem as in \cite[Lemma 1.2]{EM1}.
\qed
\end{proof}

We emphasize  that the mapping $f,$ describing the moving boundary separating the fluids, is preserved by this transformation.
Although the transformation we made above has the draw-back of introducing additional nonlinear coefficients, it allows us  to 
rewrite the problem as a nonlinear operator equation.

\subsection{The operator equation}
We now define  solution operators to mixed boundary value problems which are closely related to our system \eqref{eq11}.
Composing these operators, we obtain thereafter an  operator equation which is equivalent to the formulation \eqref{eq11}.
Moreover, the operators defined below will reveal that it is  crucial to determine the mapping $f$, describing  the evolution of the interface.
The potentials $v_\pm$ are then the image of $f$ under these operators.   
For this reason, we refer also only to $f$ as being a solution of \eqref{eq11}.

Given $f\in \mathcal{V}$ and $(q,h)\in h^{1+\alpha}(\mathbb{S})\times h^{2+\alpha}(\mathbb{S})$,  
let $\mathcal{T}(f,q,h)\in \mbox{\it{buc}}\,^{2+\alpha}(\Omega_-)$ denote the solution of the linear, elliptic mixed boundary value problem
\begin{equation}\label{eq12}\left\{
\begin{array}{rlllll}
\mathcal{A}_-(f) v_-&=&0&\text{in}& \Omega_-,\\[1ex]
\mathcal{B}_-(f)v_-&=&q &\text{on}& \Gamma_{0},\\[1ex]
v_-&=&h&\text{on}& \Gamma_{-1}.
\end{array}
\right.
\end{equation}
Further on, we define  
$\mathcal{S}:\mathcal{V}\times h^{1+\alpha}(\mathbb{S})\times h^{2+\alpha}(\mathbb{S})\to\mbox{\it{buc}}\,^{2+\alpha}(\Omega_+)$
 by writing 
$\mathcal{S}(f,q,h)$ for the unique solution of the problem
\begin{equation}\label{eq13}\left\{
\begin{array}{rlllll}
\mathcal{A}_+(f) v_+&=&0&\text{in}& \Omega_+,\\[1ex]
\mathcal{B}(f)v_+&=&q&\text{on}& \Gamma_1,\\[1ex]
 v_+&=&h+\gamma \kappa(f)+\varpi f&\text{on}&\Gamma_{0}.
\end{array}
\right.
\end{equation}

With this notation we observe that $(f,v_+,v_-)$ is a solution of \eqref{eq11} if and only if $f(0)=f_0,$ $v_-=\mathcal{T}(f,-\partial_tf,g_1),$
$v_+=\mathcal{S}(f,g_2,{\mathop{\rm tr}\,}  v_-) $, and 
\begin{equation}\label{eq:CPP}
\partial_tf+\mathcal{B}_+(f)\mathcal{S}(f,g_2,{\mathop{\rm tr}\,} \mathcal{T}(f,-\partial_tf,g_1))=0.
\end{equation}
We have obtained in this way an operator equation only for the mapping $f$ which has to be solved.
A possible approach to \eqref{eq:CPP} is to use the implicit function theorem and Newton's iteration method, similarly as in \cite{Y1}.
We proceed differently by decomposing  the operators $\mathcal{T}$ and $\mathcal{S}$ appropriately so that  we may rewrite the equation \eqref{eq:CPP}
as an abstract Cauchy problem  which has order $1+2{\mathop{\rm sign}\,}(\gamma).$

Let us start by noticing that $\mathcal{T}(f,q, h)=\mathcal{T}_1(f)h+\mathcal{T}_2(f)q$,
 where, given $f\in\mathcal{V},$ the linear operators $\mathcal{T}_1(f)$ and $\mathcal{T}_2(f)$ are defined by
\[
\mathcal{T}_1(f)h:=(\mathcal{A}_-(f),\mathcal{B}_-(f),{\mathop{\rm tr}})^{-1}(0,0,h), \, \, \mathcal{T}_2(f)q:=(\mathcal{A}_-(f),\mathcal{B}_-(f),{\mathop{\rm tr}})^{-1}(0,q,0). 
\] 
 Similarly, we decompose $\mathcal{S}(f,q,h)=\mathcal{S}_1(f)h+\mathcal{S}_2(f)q+\mathcal{S}_3(f),$  with 
\begin{align*}
&\mathcal{S}_1(f)h:=(\mathcal{A}_+(f),\mathcal{B}(f),{\mathop{\rm tr}})^{-1}(0,0,h),\quad \mathcal{S}_2(f)q:=(\mathcal{A}_+(f),\mathcal{B}(f),{\mathop{\rm tr}})^{-1}(0,q,0),\\[1ex]
&\mathcal{S}_3(f):=(\mathcal{A}_+(f),\mathcal{B}(f),{\mathop{\rm tr}})^{-1}(0,0,\gamma\kappa(f)+\varpi f).
\end{align*} 
In virtue of \eqref{eq10}-\eqref{anaB1},  and taking also into consideration that the operator mapping a bijective linear operator onto its inverse is analytical,  
 we obtain that the linear operators $\mathcal{T}_i(f),$ $\mathcal{S}_i(f),$ $i=1,2,$ as well as $\mathcal{S}_3(f),$ depend analytically on $f$. 
 With this notation, problem \eqref{eq:CPP} rewrites
 \begin{align*}
 0=&\partial_tf+\mathcal{B}_+(f)\mathcal{S}(f,g_2,{\mathop{\rm tr}\,} \mathcal{T}_1(f)g_1-{\mathop{\rm tr}\,} \mathcal{T}_2(f)\partial_tf))\\[2ex]
 =&\left[{\mathop{\rm id}\,}_{h^{1+\alpha}(\mathbb{S})}-\mathcal{B}_+(f)\mathcal{S}_1(f){\mathop{\rm tr}\,} \mathcal{T}_2(f)\right]\partial_tf+\mathcal{B}_+(f)\mathcal{S}_3(f)+\mathcal{B}_+(f)\mathcal{S}_1(f){\mathop{\rm tr}\,}\mathcal{T}_1(f)g_1\\[1ex]
&+\mathcal{B}_+(f)\mathcal{S}_2(f)g_2.
 \end{align*}

 The new expression of \eqref{eq:CPP} is more involved, but we remark that the time derivative $\partial_tf$ appears in the equality above as the argument of an operator belonging to $\mathcal{L}(h^{1+\alpha}(\mathbb{S})).$
 This operator is  invertible for all $f\in\mathcal{V}.$

 \begin{lemma} \label{L:1}
Given $f\in\mathcal{V},$ the operator
$\mathcal{G}(f):={\mathop{\rm id}\,}_{h^{1+\alpha}(\mathbb{S})}-\mathcal{B}_+(f)\mathcal{S}_1(f){\mathop{\rm tr}\,} \mathcal{T}_2(f)$ 
is an isomorphism, i.e.   $\mathcal{G}(f)\in{\it Isom}(h^{1+\alpha}(\mathbb{S})).$
\end{lemma}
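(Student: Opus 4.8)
The plan is to translate the identity $\mathcal{G}(f)q=0$ (and, for surjectivity, $\mathcal{G}(f)q=r$) back into a two–phase elliptic \emph{transmission} problem for \emph{genuinely} harmonic functions on the physical domains $\Omega_\pm(f)$, and then to exploit the positivity built into that problem. Throughout, $f\in\mathcal{V}$ is fixed and $\Omega=\mathbb{S}\times(-1,1)$. The starting point is the dictionary: given $q\in h^{1+\alpha}(\mathbb{S})$, set $v_-:=\mathcal{T}_2(f)q$, $h:=\tr v_-\in h^{2+\alpha}(\mathbb{S})$, $v_+:=\mathcal{S}_1(f)h$ and $u_\pm:=\phi_*^{f\pm}v_\pm\in\mbox{\it buc}\,^{2+\alpha}(\Omega_\pm(f))$. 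Unwinding the definitions of $\mathcal{A}_\pm(f),\mathcal{B}_\pm(f),\mathcal{B}(f)$, and using that $\phi_{f\pm}$ restrict to the identity on $\Gamma_{\pm1}$ and map $\Gamma_0$ onto $\Gamma(f)$ via $x\mapsto(x,f(x))$, the $u_\pm$ are harmonic in $\Omega_\pm(f)$ with $u_-=0$ on $\Gamma_{-1}$, $\partial_y u_+=0$ on $\Gamma_1$, $u_+=u_-$ on $\Gamma(f)$, and one has $\mathcal{B}_\pm(f)v_\pm=k\mu_\pm^{-1}\sqrt{1+f'^2}\,\partial_\nu u_\pm$ on $\Gamma(f)$; hence $\mathcal{G}(f)q$, pulled back to $\Gamma(f)$, equals $k\sqrt{1+f'^2}\big(\mu_-^{-1}\partial_\nu u_--\mu_+^{-1}\partial_\nu u_+\big)$.

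For \emph{injectivity}, assume $\mathcal{G}(f)q=0$; by the dictionary this forces the transmission condition $\mu_-^{-1}\partial_\nu u_-=\mu_+^{-1}\partial_\nu u_+$ on $\Gamma(f)$, and of course $u_+=u_-$ there. Integrating $\Delta u_\pm=0$ against $u_\pm$ over $\Omega_\pm(f)$, weighting by $\mu_\pm^{-1}$ and adding, the boundary integrals over $\Gamma_{\pm1}$ vanish (Dirichlet, resp.\ Neumann condition) and the two interface integrals cancel, leaving
\[
\mu_-^{-1}\!\int_{\Omega_-(f)}|\nabla u_-|^2\,dz+\mu_+^{-1}\!\int_{\Omega_+(f)}|\nabla u_+|^2\,dz=0 .
\]
Thus $u_\pm$ are constant, $u_-|_{\Gamma_{-1}}=0$ and $u_+=u_-$ on $\Gamma(f)$ give $u_\pm\equiv0$, hence $v_\pm\equiv0$ and $q=\mathcal{B}_-(f)v_-=0$. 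Note this works for every $f\in\mathcal{V}$ and needs no restriction on $\mu_\pm$ or $\varpi$.

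For \emph{surjectivity}, given $r\in h^{1+\alpha}(\mathbb{S})$ I would first solve the inhomogeneous transmission problem: find $u$ harmonic in $\Omega_-(f)$ and in $\Omega_+(f)$ with $u=0$ on $\Gamma_{-1}$, $\partial_y u=0$ on $\Gamma_1$, $u_+=u_-$ on $\Gamma(f)$ and $k\sqrt{1+f'^2}\big(\mu_-^{-1}\partial_\nu u_--\mu_+^{-1}\partial_\nu u_+\big)=r$ on $\Gamma(f)$. A weak solution in $\{u\in H^1(\Omega):u|_{\Gamma_{-1}}=0\}$ is produced by Lax--Milgram from the bounded, coercive (Poincar\'e) form $(u,\varphi)\mapsto\mu_-^{-1}\int_{\Omega_-(f)}\nabla u\cdot\nabla\varphi+\mu_+^{-1}\int_{\Omega_+(f)}\nabla u\cdot\nabla\varphi$ and the functional $\varphi\mapsto k^{-1}\int_{\mathbb{S}}r(x)\,\varphi(x,f(x))\,dx$, the conditions on $\Gamma_1$ and $\Gamma(f)$ being natural boundary conditions. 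Schauder theory for two-phase elliptic problems across the interface $\Gamma(f)$ (of class $C^{2+\alpha}$ if $\gamma=0$, $C^{4+\alpha}$ if $\gamma>0$), together with the standard approximation argument identifying the closures that define the small H\"older spaces, upgrades $u_\pm$ to $\mbox{\it buc}\,^{2+\alpha}(\Omega_\pm(f))$. Setting $q:=k\mu_-^{-1}\sqrt{1+f'^2}\,\partial_\nu u_-|_{\Gamma(f)}\in h^{1+\alpha}(\mathbb{S})$ and reading the dictionary in reverse --- here Lemma~\ref{L:equivalent} and uniqueness for the mixed problems \eqref{eq12}, \eqref{eq13} identify $\phi_{f-}^*u_-=\mathcal{T}_2(f)q$ and $\phi_{f+}^*u_+=\mathcal{S}_1(f)\tr\mathcal{T}_2(f)q$ --- one gets $\mathcal{G}(f)q=r$. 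So $\mathcal{G}(f)\in\mathcal{L}(h^{1+\alpha}(\mathbb{S}))$ is bijective, and the open mapping theorem yields $\mathcal{G}(f)\in{\it Isom}(h^{1+\alpha}(\mathbb{S}))$.

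The real work is not the energy estimate but the two supporting facts: (i) the careful transfer through the diffeomorphisms $\phi_{f\pm}$, i.e.\ checking that $\mathcal{A}_\pm(f),\mathcal{B}_\pm(f),\mathcal{B}(f)$ encode exactly the Laplace equation with the Dirichlet datum on $\Gamma_{-1}$, the Neumann datum on $\Gamma_1$ and the weighted conormal jump on $\Gamma(f)$; and (ii) the elliptic (Schauder) regularity for the transmission problem in the small H\"older scale, which is delicate in the gravity-only case $\gamma=0$, where $\Gamma(f)$ has only $C^{2+\alpha}$ regularity. A variant that avoids the variational step is to compute $\mathcal{G}(0)$ explicitly by Fourier series --- it is the multiplier with symbol $1+(\mu_-/\mu_+)\tanh^2|n|\ge1$, hence an isomorphism --- and to deduce the general case from the homotopy $t\mapsto\mathcal{G}(tf)$, $t\in[0,1]$ (admissible since $tf\in\mathcal{V}$), using that each $\mathcal{G}(tf)$ is an order-zero elliptic pseudodifferential operator on $\mathbb{S}$ with invertible principal symbol, hence Fredholm of index $0$; combined with injectivity this again gives the claim, but it requires identifying the principal symbol of $\mathcal{G}(f)$ and is no shorter.
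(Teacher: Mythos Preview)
Your proof is correct and follows essentially the same route as the paper: translate $\cG(f)q=p$ into a two-phase transmission (diffraction) problem for harmonic $u_\pm$ on the physical domains, establish uniqueness, then existence via elliptic theory, and pass to the small H\"older setting by a density/approximation argument. The only tactical differences are that the paper uses the maximum principle (rather than your energy identity) for uniqueness, and cites the Ladyzhenskaya--Uraltseva theory for the diffraction problem \eqref{eq:S3S} directly (rather than your Lax--Milgram plus Schauder regularity) for existence; the density step is then phrased in the paper as working first in $C^{1+\alpha}(\s)$ and invoking the analytic dependence of $\cG$ on $f$. Your energy argument is arguably cleaner for the uniqueness part, while the paper's direct citation compresses the existence step; both lead to the same conclusion with the same level of rigor.
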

\begin{proof} Let $p, q\in h^{1+\alpha}(\mathbb{S})$ be given such that $\mathcal{G}(f)q=p$.
If $u_-:=(\mathcal{T}_2(f)q)\circ\phi_{f-}^{-1}$ and $u_+:=(\mathcal{S}_1(f){\mathop{\rm tr}\,} \mathcal{T}_2(f)q)\circ\phi_{f+}^{-1},$ then $(u_-,u_+)$ solves
the following system
\begin{equation}\label{eq:S1S}
\left\{
\begin{array}{rllllll}
\Delta u_\pm&=&0&\text{in}& \Omega_\pm(f), \\[1ex]
u_-&=&0&\text{on}&\Gamma_{-1},\\[1ex]
k\mu_-^{-1}\partial_\nu u_-&=&q/\sqrt{1+f'^2}&\text{on}&\Gamma(f),\\[1ex]
u_--u_+&=&0  &\text{on}& \Gamma(f),\\[1ex]
\partial_y u_+&=&0&\text{on}& \Gamma_1.
\end{array}
\right. 
\end{equation} 
With this notation   $\mathcal{G}(f)q=p$ is  equivalent to 
\begin{equation}\label{eq:S2S}
k\mu_+^{-1}\partial_\nu u_+=(q-p)/\sqrt{1+f'^2}\quad\text{on}\quad\Gamma(f).
\end{equation}
Given $p\in C^{1+\alpha}(\mathbb{S}),$ consider the diffraction problem which is obtained by eliminating $q$ from  \eqref{eq:S2S} and the fourth equation of \eqref{eq:S1S}
\begin{equation}\label{eq:S3S}
\left\{
\begin{array}{rllllll}
\Delta u_\pm&=&0&\text{in}& \Omega_\pm(f), \\[1ex]
u_-&=&0&\text{on}&\Gamma_{-1},\\[1ex]
k\mu_-^{-1}\partial_\nu u_- -k\mu_+^{-1}\partial_\nu u_+&=&p/\sqrt{1+f'^2}&\text{on}&\Gamma(f),\\[1ex]
u_--u_+&=&0  &\text{on}& \Gamma(f),\\[1ex]
\partial_y u_+&=&0&\text{on}& \Gamma_1.
\end{array}
\right. 
\end{equation} 
We infer from maximum principles for elliptic problems that,  when $p=0$,  \eqref{eq:S3S} has only the trivial solution $(u_-,u_+)=(0,0).$
Whence, for general $p\in C^{1+\alpha}(\mathbb{S}),$ \eqref{eq:S3S} has a unique solution $(u_-,u_+)\in \mbox{\it{BUC}}\,^{2+\alpha}(\Omega_-)\times\mbox{\it{BUC}}\,^{2+\alpha}(\Omega_+)$  
(see \cite[ Theorems 1.61 and  16.2]{La}).
Consequently, we find from \eqref{eq:S2S} a unique $q\in C^{1+\alpha}(\mathbb{S})$ such that $\mathcal{G}(f)q=p.$
Taking into consideration that $\mathcal{G}$ depends analytically on $f$, the desired result follows from a density argument. 
\qed
\end{proof}

Applying the inverse $\mathcal{G}(f)^{-1}$ to the equation \eqref{eq:CPP},  we obtain an equivalent formulation of our original system \eqref{eq:S}:
 \begin{prop}\label{P:parabolic} 
A function $f$ is a classical H\"older solution of  problem \eqref{eq:S} if and only if
 $f$ is a solution of the evolution equation
\begin{equation}\label{eq:CP}
\partial_tf=\Phi(t,f),\qquad f(0)=f_0,
\end{equation}
where $\Phi:[0,\infty)\times\mathcal{V}\to h^{1+\alpha}(\mathbb{S})$, $\Phi(t,f):=\Phi_1(f)+\Phi_2(t,f),$ is  defined by
\begin{align}
\label{eq:Phi1}
&\Phi_1(f):=-\left(\mathcal{G}(f)\right)^{-1}\mathcal{B}_+(f)\mathcal{S}_3(f),\\[2ex]
\label{eq:Phi2}
&\Phi_2(t,f):=-\left(\mathcal{G}(f)\right)^{-1}\left[\mathcal{B}_+(f)\mathcal{S}_1(f){\mathop{\rm tr}\,} \mathcal{T}_1(f)g_1(t)+\mathcal{B}_+(f)\mathcal{S}_2(f)g_2(t)\right].
\end{align}
for all $ (t,f)\in[0,\infty)\times\mathcal{V}.$
\end{prop}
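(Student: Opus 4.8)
The plan is to deduce the statement by concatenating the reformulations already established, so that nothing beyond Lemmas \ref{L:equivalent} and \ref{L:1} is needed. First I would invoke Lemma \ref{L:equivalent} to trade \eqref{eq:S} for the equivalent system \eqref{eq11} on the fixed domains $\Omega_\pm$, after which only \eqref{eq11} and the scalar unknown $f$ play a role. As already noted in the text, $(f,v_+,v_-)$ solves \eqref{eq11} if and only if $f(0)=f_0$, $v_-=\T(f,-\partial_tf,g_1)$, $v_+=\kS(f,g_2,\tr v_-)$, and the single scalar equation \eqref{eq:CPP} holds; the first two identities express, via the unique solvability of the mixed boundary value problems \eqref{eq12} and \eqref{eq13}, that $v_\pm$ are determined by $f$, while \eqref{eq:CPP} is what remains of the kinematic relation once $v_\pm$ have been substituted. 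Note that the regularity class $f\in C([0,T],\mathcal V)\cap C^1([0,T],h^{1+\alpha}(\s))$ is precisely what makes $-\partial_tf$ an admissible argument for $\T_2(f)$.

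Next I would insert the linear decompositions $\T(f,q,h)=\T_1(f)h+\T_2(f)q$ and $\kS(f,q,h)=\kS_1(f)h+\kS_2(f)q+\kS_3(f)$ into \eqref{eq:CPP} and collect the terms carrying $\partial_tf$, obtaining the identity recorded just before Lemma \ref{L:1},
\begin{equation*}
\cG(f)\,\partial_tf=-\mathcal B_+(f)\kS_3(f)-\mathcal B_+(f)\kS_1(f)\tr\T_1(f)g_1(t)-\mathcal B_+(f)\kS_2(f)g_2(t),
\end{equation*}
where $\cG(f)=\id_{h^{1+\alpha}(\s)}-\mathcal B_+(f)\kS_1(f)\tr\T_2(f)$. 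By Lemma \ref{L:1} we have $\cG(f)\in\Isom(h^{1+\alpha}(\s))$, so applying $\cG(f)^{-1}$ and comparing with \eqref{eq:Phi1}--\eqref{eq:Phi2} gives exactly $\partial_tf=\Phi_1(f)+\Phi_2(t,f)=\Phi(t,f)$, while the initial condition $f(0)=f_0$ is untouched; this is the forward implication. For the converse I would run the chain backwards: given a solution $f$ of \eqref{eq:CP}, set $v_-:=\T(f,-\partial_tf,g_1)$ and $v_+:=\kS(f,g_2,\tr v_-)$, observe that these satisfy by construction the first four equations of \eqref{eq11} (all but the kinematic relation and the initial condition), multiply \eqref{eq:CP} by $\cG(f)$ and re-expand the decompositions to recover $\partial_tf+\mathcal B_+(f)v_+=0$ on $\Gamma_0$, use the Neumann condition of \eqref{eq12} to get $\mathcal B_-(f)v_-=-\partial_tf$ on $\Gamma_0$, and finally transport $(f,\phi_*^{f+}v_+,\phi_*^{f-}v_-)$ back through the second part of Lemma \ref{L:equivalent} to a classical H\"older solution of \eqref{eq:S}.

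Along the way I would check that $\Phi$ is a well-defined map $[0,\infty)\times\mathcal V\to h^{1+\alpha}(\s)$: by \eqref{eq10}--\eqref{anaB1} the operators $\mathcal B_+(f)$, $\kS_i(f)$, $\T_i(f)$ map into the spaces stated there, the curvature term satisfies $\gamma\kappa(f)+\varpi f\in h^{2+\alpha}(\s)$ so that it is an admissible Dirichlet datum for $\kS_3$, and since inversion of isomorphisms is analytic one has $\cG(f)^{-1}\in\mathcal L(h^{1+\alpha}(\s))$ as well; hence each summand defining $\Phi$ lands in $h^{1+\alpha}(\s)$. The one step that is not pure bookkeeping --- and the one I expect to be the main obstacle --- is the regularity accounting in the converse direction: one must know that the potentials reconstructed from $f$ lie in the small-H\"older class $\mbox{\it buc}\,^{2+\alpha}(\Omega_\pm)$ and that composition with $\phi_{f\pm}$ preserves it. The first is built into the well-posedness of the solution operators $\T$ and $\kS$ in the small-H\"older scale, obtained as in Lemma \ref{L:1} from the unique solvability of the associated elliptic mixed and diffraction problems together with a density argument; the second is the mean value theorem estimate of \cite[Lemma 1.2]{EM1} already used in Lemma \ref{L:equivalent}. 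Thus no genuinely new analytic difficulty arises.
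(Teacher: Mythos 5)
Your proof is correct and follows exactly the route the paper takes: reduce \eqref{eq:S} to \eqref{eq11} via Lemma \ref{L:equivalent}, substitute the solution operators $\T$ and $\kS$, decompose them linearly to isolate $\partial_t f$, and invert $\cG(f)$ using Lemma \ref{L:1}. The paper merely states the proposition after carrying out this computation in the surrounding text, so your write-up is essentially the paper's implicit proof made explicit, including the reverse direction and the regularity bookkeeping.
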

 
 \section{The proof of Theorem~\ref{T:main}}
 We use in this section the abstract theory for evolution equations as presented in \cite{L} to prove the results stated in Theorem \ref{T:main}.
 Let us first emphasize that the operator $\Phi$ is analytic with respect to $f$ and the partial derivatives with respect to this variable are continuous in both variables $(t,f).$
 In order to prove the well-posedness of the problem \eqref{eq:S}, we are left to check that  the Fr\'echet derivative $\partial_f\Phi(0,0)$  generates a strongly continuous and analytic semigroup.
 The following lemma is, besides Proposition \ref{P:parabolic}, the essential part in the proof of Theorem \ref{T:main}.
 \begin{lemma}\label{L:esse}
 If $g_i(0)=:c_i\in\mathbb{R}, i=1,2,$ then the  derivative $\partial_f\Phi(0,0)$ is a Fourier multiplier of order $1+2{\mathop{\rm sign}\,}(\gamma)$.
More exactly, given $f\in h^{2+2{\mathop{\rm sign}\,}(\gamma)+\alpha}(\mathbb{S})$ let $f=\sum_{m\in\mathbb{Z}}a_me^{imx}$ denote its Fourier series expansion.
Then, we have 
 \begin{equation}\label{eq:final}
\text{$\partial_f\Phi(0,0)f=\sum_{m\in\mathbb{Z}}\lambda_ma_me^{imx},$ }
\end{equation}
where
\begin{equation}\label{eq:final2}
\lambda_m:=\frac{k|m|\tanh(|m|)}{\mu_++\mu_-\tanh^2(m)}\left[c_2\left(\frac{\mu_-}{\mu_+}-1\right)+\varpi-\gamma m^2\right],\quad m\in\mathbb{Z}.
\end{equation}
 \end{lemma}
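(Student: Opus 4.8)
The strategy is to compute $\partial_f\Phi(0,0)$ explicitly by linearising the definitions \eqref{eq:Phi1}--\eqref{eq:Phi2} at $f=0$ and exploiting the fact that all the constituent operators act very simply on the flat reference configuration. First I would observe that at $f=0$ the domains $\Omega_\pm(0)=\Omega_\pm$ are flat strips, so $\mathcal{A}_\pm(0)=\Delta$, $\mathcal{B}_\pm(0)v=k\mu_\pm^{-1}{\mathop{\rm tr}}\,\partial_yv$, and $\mathcal{B}(0)v={\mathop{\rm tr}}_1\partial_yv$. Because $\Phi_1$ is of the form $-\mathcal{G}(f)^{-1}\mathcal{B}_+(f)\mathcal{S}_3(f)$ with $\mathcal{S}_3(0)=0$ (indeed $\kappa(0)=0$ and $\varpi\cdot 0=0$), the product rule kills all terms in which a derivative does not fall on $\mathcal{S}_3$; similarly, since $\mathcal{G}(0)={\mathop{\rm id}}$ (as $\mathcal{B}_+(0)\mathcal{S}_1(0){\mathop{\rm tr}}\,\mathcal{T}_2(0)$ contributes only at first order, which does not matter here because it multiplies something vanishing at $0$), one gets the clean formula
\begin{equation*}
\partial_f\Phi(0,0)f=-\mathcal{B}_+(0)\big[\partial_f\mathcal{S}_3(0)f\big]-\mathcal{B}_+(0)\mathcal{S}_1(0){\mathop{\rm tr}}\,\mathcal{T}_1(0)\big[(\partial_f g_1)(0)\cdot 0\big]-\dots
\end{equation*}
i.e. the $\Phi_2$-part contributes only through the explicit $f$-dependence of the operators acting on the \emph{constants} $g_i(0)=c_i$. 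Thus one must linearise $\mathcal{B}_+(f)\mathcal{S}_2(f)c_2$ and $\mathcal{B}_+(f)\mathcal{S}_1(f){\mathop{\rm tr}}\,\mathcal{T}_1(f)c_1$ at $f=0$, plus the $\mathcal{S}_3$-term carrying $\gamma\kappa(f)+\varpi f$.

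Next I would reduce everything to elliptic boundary value problems on the flat strips, where Fourier series in $x$ diagonalise the Laplacian: writing $f=\sum_m a_m e^{imx}$, a harmonic function in $\Omega_\pm$ with boundary data built from $e^{imx}$ is a combination of $e^{imx}\cosh(my)$ and $e^{imx}\sinh(my)$ (and the constant/linear modes for $m=0$). Concretely, I would unravel the chain: the Dirichlet datum on $\Gamma_0$ for the $v_+$-problem is $h+\gamma\kappa(f)+\varpi f$ where $h={\mathop{\rm tr}}\,v_-$; solving the mixed problem \eqref{eq12} for $v_-$ (Dirichlet $g_1=c_1$ on $\Gamma_{-1}$, Neumann-type $\mathcal{B}_-$ datum $-\partial_t f$ on $\Gamma_0$) and \eqref{eq13} for $v_+$ (Neumann $\mathcal{B}(f)$ datum $g_2=c_2$ on $\Gamma_1$), then applying $\mathcal{B}_+(f)$. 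Linearising, the constant boundary data $c_1,c_2$ interact with the first-order-in-$f$ parts of $\mathcal{A}_\pm, \mathcal{B}_\pm, \mathcal{B}$ to produce the source $e^{imx}$-modes; one tracks how each mode propagates through the two strips. The hyperbolic-function boundary-value computations yield, for each $m\neq 0$, a scalar multiplier; the factor $k|m|\tanh(|m|)/(\mu_++\mu_-\tanh^2(m))$ is exactly what comes out of matching normal derivatives of harmonic extensions across $\Gamma_0$ with a Neumann condition at the far boundaries $\Gamma_{\pm 1}$ (the $\tanh$'s encoding the finite width of the strips), while the bracket $[\,c_2(\mu_-/\mu_+-1)+\varpi-\gamma m^2\,]$ collects the three source contributions: $-\gamma m^2$ from $\kappa(f)=f''+O(f^3)$, $\varpi$ from the explicit $\varpi f$ term, and $c_2(\mu_-/\mu_+-1)$ from the way the constant flux $c_2$ feels the perturbed geometry through the mismatch of the two viscosities. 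I would also separately check the $m=0$ mode (where $\lambda_0=0$, consistent with the formula since $|m|\tanh(|m|)=0$) using the linear-in-$y$ harmonic modes, which matches the conservation law $\frac{d}{dt}\int_{\mathbb S} f\,dx=-2\pi\int_{\mathbb S}g_2$ noted after Theorem \ref{T:main}.

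\textbf{Main obstacle.} The bookkeeping in the linearisation is the crux: one has several nested solution operators $\mathcal{T}_i(f),\mathcal{S}_i(f),\mathcal{B}_\pm(f),\mathcal{G}(f)^{-1}$, each analytic in $f$, and must carefully apply the product rule at $f=0$, discarding the many terms that vanish either because $\mathcal{S}_3(0)=0$ or because the ``background'' solution at $f=0$ with constant data $c_i$ is itself constant (so its derivatives, which appear as coefficients in the linearised $\mathcal{A}_\pm$, drop out). Keeping straight which differentiations survive — and then, for the surviving ones, correctly solving the constant-coefficient ODEs in $y$ with the mixed (Dirichlet at one wall, Neumann at the other, transmission at $\Gamma_0$) boundary conditions — is where essentially all the work lies. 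A secondary but real point is to justify that the resulting formal Fourier multiplier indeed defines the bounded operator $\partial_f\Phi(0,0)\in\mathcal{L}(h^{2+2{\mathop{\rm sign}}(\gamma)+\alpha}(\mathbb S),h^{1+\alpha}(\mathbb S))$ predicted by the general regularity of $\Phi$; this follows from the analyticity already recorded in \eqref{eq10}--\eqref{anaB1}, so it suffices to identify the symbol on the dense set of trigonometric polynomials, but it should be stated. Once the symbol \eqref{eq:final2} is in hand, Lemma \ref{L:esse} is proved; the asymptotics $\lambda_m\sim -\gamma k|m|^3/(\mu_++\mu_-)$ for $\gamma>0$, resp. $\lambda_m\sim (k|m|/(\mu_++\mu_-))[c_2(\mu_-/\mu_+-1)+\varpi]$ for $\gamma=0$, then feed directly into the generation result and condition \eqref{eq:cond}.
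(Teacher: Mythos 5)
Your overall strategy — linearise \eqref{eq:Phi1}–\eqref{eq:Phi2} at $f=0$, exploit $\mathcal{S}_3(0)=0$ and the fact that the background solution with constant data is constant, diagonalise the flat-strip elliptic problems by Fourier series, and solve the resulting constant-coefficient ODEs in $y$ — is exactly the route the paper takes. However, there is one concrete error that would derail the computation: your claim that $\mathcal{G}(0)={\mathop{\rm id}}$ is false, and the reason you give for it (``$\mathcal{B}_+(0)\mathcal{S}_1(0){\mathop{\rm tr}}\,\mathcal{T}_2(0)$ contributes only at first order'') is incorrect. That composition is a \emph{zeroth-order} Fourier multiplier in $f$: it takes $q=\sum_m a_m e^{imx}$ to $-\tfrac{\mu_-}{\mu_+}\sum_m \tanh^2(m)\,a_m e^{imx}$, so that in fact $\mathcal{G}(0)q=\sum_m\bigl(1+\tfrac{\mu_-}{\mu_+}\tanh^2(m)\bigr)a_m e^{imx}$ (this is \eqref{eq:G_1} in the paper). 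It is precisely $\mathcal{G}(0)^{-1}$ that produces the denominator $\mu_++\mu_-\tanh^2(m)$ in $\lambda_m$; with $\mathcal{G}(0)={\mathop{\rm id}}$ you would obtain $k\mu_+^{-1}|m|\tanh(|m|)[\varpi-\gamma m^2]$ for the $\Phi_1$-contribution, which is not the paper's answer (and is visibly inconsistent with the denominator you yourself quote in the final sentence of your second paragraph). The root of the confusion is likely a mix-up between the kinematic trace ${\mathop{\rm tr}}\,\mathcal{T}_2(0)q$, which is nonzero (Neumann data $q$ on $\Gamma_0$, Dirichlet zero on $\Gamma_{-1}$), and ${\mathop{\rm tr}}\,\mathcal{T}_1(0)$ applied to a constant.

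Two consequences of this error that you should repair. First, $\Phi_1(f)=-\mathcal{G}(f)^{-1}\mathcal{B}_+(f)\mathcal{S}_3(f)$, so even though $\mathcal{S}_3(0)=0$ makes the derivative of $\mathcal{G}(f)^{-1}$ irrelevant \emph{here}, the zeroth-order factor $\mathcal{G}(0)^{-1}$ stays and must be applied to $\mathcal{B}_+(0)\partial\mathcal{S}_3(0)[f]$. Second, for $\partial_f\Phi_2(0,0)$ the chain rule produces three terms, including one in which the derivative does fall on $\mathcal{G}(\cdot)^{-1}$, namely $-\partial(\mathcal{G}(\cdot)^{-1})(0)[f]\,\mathcal{B}_+(0)\mathcal{S}_2(0)c_2$; the paper computes $\partial\mathcal{G}(0)[f]$ via $\partial\mathcal{F}(0)=-\mathcal{G}(0)^{-1}\partial\mathcal{G}(0)\mathcal{G}(0)^{-1}$ and finds it reduces to a Fourier multiplier $m\tanh(m)$. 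Your ``clean formula'' discards this term. Since the three $c_2$-contributions must conspire to give $(\mu_-/\mu_+-1)\,km\tanh(m)/(\mu_++\mu_-\tanh^2(m))$, dropping any one of them breaks the bookkeeping. Once you correct $\mathcal{G}(0)$ and keep the $\partial\mathcal{G}$ term, the remainder of your plan matches the paper's proof.
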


In the main body of this section we prove Lemma \ref{L:esse} and, at the end of the section, we combine our results
to prove existence, uniqueness, and dependence of  solutions to \eqref{eq:S} on the initial data and time.
To this scope, we first determine  an expansion for the isomorphism $\mathcal{G}(0)$. 
Given   $q\in h^{1+\alpha}(\mathbb{S}) $, we
consider its  Fourier series
$q=\sum_{m\in\mathbb{Z}}a_me^{imx}.$ 
As in \cite{EM1}, we obtain  that $\mathcal{T}_2(0)q$  has the following   expansion 
\[
\mathcal{T}_2(0)q=\frac{\mu_-}{k}(1+y)a_0
+\frac{\mu_-}{k}\sum_{m\in\mathbb{Z}\setminus\{0\}}\left(\frac{e^{m}e^{my}}{m(e^{m}+e^{-m})}-\frac{e^{-m}e^{-my}}{m(e^{m}+e^{-m})}\right)a_m e^{imx}.
\]
Furthermore, given $h\in h^{2+\alpha}(\mathbb{S})$ with $ h=\sum_{m\in\mathbb{Z}}b_me^{imx}$, it holds that 
\begin{equation}\label{eq:SYS}
\mathcal{S}_1(0)h=\sum_{m\in\mathbb{Z}}\left(\frac{e^{-m}e^{my}}{e^{m}+e^{-m}}
+\frac{e^{m}e^{-my}}{e^{m}+e^{-m}}\right)b_me^{imx}, 
\end{equation}
and so
\[
\mathcal{S}_1(0){\mathop{\rm tr}\,} \mathcal{T}_2(0)q=\frac{\mu_-}{k}\sum_{m\in\mathbb{Z}}\left(\frac{e^{-m}e^{my}}{e^{m}+e^{-m}}
+\frac{e^{m}e^{-my}}{e^{m}+e^{-m}}\right)\frac{\tanh(m)}{m}a_me^{imx}.
\]
We set by convention $\tanh(m)/m=1$ if $m=0.$
Summarising, we obtain that
\begin{equation}\label{eq:G_1}
\mathcal{G}(0)q=\sum_{m\in\mathbb{Z}}\left(1+\frac{\mu_-}{\mu_+}\tanh^2(m)\right)a_me^{imx}
\end{equation}
for all  $q\in h^{1+\alpha}(\mathbb{S}) $ with  $q=\sum_{m\in\mathbb{Z}}a_me^{imx}.$

 We begin by studying the mapping $\Phi_1$. 
 Taking into consideration that
 $\mathcal{S}_3(0)=0$, we obtain that 
 \begin{equation}\label{eq:P1}
 \partial\Phi_1(0)=-(\mathcal{G}(0))^{-1}\mathcal{B}_+(0)\partial\mathcal{S}_3(0),
 \end{equation}
 where for $f\in h^{2+2{\mathop{\rm sign}\,}(\gamma)+\alpha}(\mathbb{S}),$ we have  
$\partial\mathcal{S}_3(0)[f]=(\Delta,\mathcal{B}(0),{\mathop{\rm tr}\,})^{-1}(0,0,\gamma f''+\varpi f ).$ 
 The last relation follows in virtue of  $\mathcal{A}_+(0)=\Delta$ and $\partial\kappa(0)[f]=f''$.
Reconsidering \eqref{eq:SYS}, we see due to \eqref{eq:G_1} that  
  $\partial\Phi_1(0)$ is a Fourier multiplication operator with
 \begin{equation}\label{eq:mi}
  \partial\Phi_1(0)[f]=\sum_{m\in\mathbb{Z}}\frac{k\tanh(|m|)}{\mu_++\mu_-\tanh^2(m)}|m|(\varpi-\gamma m^2)a_me^{imx}
 \end{equation}
 for  $f=\sum_{m\in\mathbb{Z}}a_me^{imx}\in h^{2+2{\mathop{\rm sign}\,}(\gamma)+\alpha}(\mathbb{S}).$
 This representation  is sufficient to obtain the well-posedness of the problem when considering surface tension effects.
 However, when  surface tension is neglected, or when we study the stability properties of  equilibria, it is necessary
to analyse more closely the partial derivative $\partial_f\Phi_2(0,0).$

In the following we set   $g_i(0)=c_i$, $i=1,2,$ where $c_i\in\mathbb{R}$. 
By the chain rule we get 
 \begin{align*}
 \partial_f\Phi_2(0,0)[f]=&-\left(\mathcal{G}(0)\right)^{-1}\mathcal{B}_+(0)\partial\mathcal{S}_2(0)[f]c_2-\left(\mathcal{G}(0)\right)^{-1}\partial\mathcal{B}_+(0)[f]\mathcal{S}_2(0)c_2\\[1ex]
 &-\partial\left(\left(\mathcal{G}(\cdot)\right)^{-1}\right)(0)[f]\mathcal{B}_+(0)\mathcal{S}_2(0)c_2=:I_1f+I_2f+I_3f,
 \end{align*}
 for  $f\in h^{2+2{\mathop{\rm sign}\,}(\gamma)+\alpha}(\mathbb{S}).$
This is due to the fact that  $\mathcal{T}_1(f)c_1=c_1,$ $\mathcal{S}_1(f)c_1=c_1,$ relations which imply that  the first term of $\Phi_2(0,f)$ is the zero function:
 \[
-\left(\mathcal{G}(f)\right)^{-1}\mathcal{B}_+(f)\mathcal{S}_1(f)\mathcal{T}_1(f)c_1=0, \qquad\forall f\in\mathcal{V}.
\]
 
We  determine next expansions for the linear operators $I_i$, $i\in\{1,2,3\}.$
To this scope we note that
 \begin{align*}
 \partial\mathcal{A}_\pm(0)[f]&=-2(1\mp y)f'\frac{\partial^2}{\partial x\partial y}\pm 2f\frac{\partial^2}{\partial y^2}-(1\mp y)f''\frac{\partial}{\partial y},\\[1ex]
 \partial\mathcal{B}_\pm(0)[f]&=\frac{k}{\mu_\pm}\left(\pm f{\mathop{\rm tr}\,} \frac{\partial}{\partial y}-f'{\mathop{\rm tr}\,}  \frac{\partial}{\partial x}\right), \quad \text{and} \quad 
\partial\mathcal{B}(0)[f]=f{\mathop{\rm tr}}_1\frac{\partial}{\partial y}
 \end{align*}
 for all $ f\in h^{2+2{\mathop{\rm sign}\,}(\gamma)+\alpha}(\mathbb{S}).$

 Let  $ f\in h^{2+2{\mathop{\rm sign}\,}(\gamma)+\alpha}(\mathbb{S})$ be given and denote by $f=\sum_{m\in\mathbb{Z}} a_m e^{imx}$ its Fourier series.
We begin determining  the expansion corresponding to $I_1f.$
The function  $\partial\mathcal{S}_2(0)[f]c_2$ is the solution of the boundary value problem
\begin{equation*}
\left\{
\begin{array}{rlllll}
\Delta w&=&-\partial\mathcal{A}_+(0)[f]\mathcal{S}_2(0)c_2&\text{in}& \Omega_+,\\[1ex] 
\partial_yw&=&-\partial\mathcal{B}(0)[f]\mathcal{S}_2(0)c_2&\text{on}& \Gamma_1,\\[1ex]
 w&=&0&\text{on}&\Gamma_{0},
\end{array}
\right.
\end{equation*} 
where $\mathcal{S}_2(0)c_2=c_2y$ in $\Omega_+.$
We make the following Fourier series ansatz
 \[
 \partial\mathcal{S}_2(0)[f]c_2=\sum_{m\in\mathbb{Z}} w_m(y)e^{imx}.
\]
 Plugging  this expression into the system found above, we are left, after identifying the coefficients of $e^{imx}$, to solve the problems
 \begin{equation}\label{eq:how0}
\left\{
\begin{array}{rlllll}
 w_m''-m^2w_m&=&c_2a_m m^2 (y-1)&\text{in}& 0<y<1,\\[1ex]
w_m'(1)&=&-c_2a_m&\text{on}& \Gamma_1,\\[1ex]
 w_m(0)&=&0&\text{on}&\Gamma_{0}.
\end{array}
\right.
\end{equation} 
The solution of \eqref{eq:how0} is given by 
 \begin{align*}
w_m(y)=&c_2a_m\tanh(m)\frac{e^{my}-e^{-my}}{2}-c_2a_m\frac{e^{my}+e^{-my}}{2}+c_2a_m(1-y)
\end{align*}
for all $m\in\mathbb{N}$ and $y\in[0,1].$
 Whence,
 \[
\mathcal{B}_+(0)w=\frac{k}{\mu_+}\partial_\nu w=\sum_{m\in\mathbb{Z}}\frac{k}{\mu_+}(m\tanh(m)-1)c_2a_me^{imx}
\]
 and we obtain, in view of \eqref{eq:G_1},  the following expansion for $I_1f$
 \[
I_1f=-\sum_{m\in\mathbb{Z}}\frac{k(m\tanh(m)-1)}{\mu_++\mu_-\tanh^2(m)}c_2a_me^{imx}.
\]
 Since $\mathcal{S}_2(0)c_2=c_2y$ in $\Omega_+,$ a simple computation yields
  \[
I_2f=-\sum_{m\in\mathbb{Z}}\frac{k}{\mu_++\mu_-\tanh^2(m)}c_2a_me^{imx}.
\]

Lastly, we look for the Fourier  series of the function
$I_3f.$ 
To simplify notation we set $\mathcal{F}:=\mathcal{G}^{-1}$ and put $c:=kc_2/\mu_+.$
In view of $\mathcal{F}(f)\mathcal{G}(f)={\mathop{\rm id}\,}_{h^{1+\alpha}(\mathbb{S})},$  the chain rule yields
$\partial\mathcal{F}(0)[f]c=-\left(\mathcal{G}(0)\right)^{-1}\partial\mathcal{G}(0)[f]\left(\mathcal{G}(0)\right)^{-1}c,$
while \eqref{eq:G_1} implies that $\mathcal{G}(0)c=c$.
From the definition of $\mathcal{G},$ we get by differentiation that
\begin{align*}
\partial\mathcal{G}(0)[f]c=&-\partial\mathcal{B}_+(0)[f]\mathcal{S}_1(0){\mathop{\rm tr}\,} \mathcal{T}_2(0)c-\mathcal{B}_+(0)\partial\mathcal{S}_1(0)[f]{\mathop{\rm tr}\,} \mathcal{T}_2(0)c\\[1ex]
&-\mathcal{B}_+(0)\mathcal{S}_1(0){\mathop{\rm tr}\,} \partial\mathcal{T}_2(0)[f]c=:J_1f+J_2f+J_3f.
\end{align*}
 
We start by analysing $J_3.$ 
Differentiating \eqref{eq12} with respect to $f$, we conclude that  $\partial\mathcal{T}_2(0)[f]c$ is the solution of the linear elliptic problem
\begin{equation*}
\left\{
\begin{array}{rlllll}
\Delta w&=&-\partial\mathcal{A}_-(0)[f]\mathcal{T}_2(0)c=c\mu_-k^{-1}f''(1+y)&\text{in}& \Omega_-,\\[1ex]
\partial_yw&=&-\mu_-k^{-1}\partial\mathcal{B}_-(0)[f]\mathcal{T}_2(0)c=f\partial_y(\mathcal{T}_2(0)c)=c\mu_-k^{-1}f&\text{on}& \Gamma_0,\\[1ex]
 w&=&0&\text{on}&\Gamma_{-1},
\end{array}
\right.
\end{equation*} 
where we made use of the relation $\mathcal{T}_2(0)c=c\mu_-k^{-1}(1+y).$ 
Expanding
 \[
w(x,y)=\sum_{m\in\mathbb{Z}} w_m(y)e^{imx},\qquad (x,y)\in\Omega_-,
\]
we get that $w_m$ is the solution of the system
 \begin{equation}\label{eq:how}
\left\{
\begin{array}{rlllll}
 w_m''-m^2w_m&=&-c\mu_-k^{-1}a_m m^2 (1+y)&\text{in}& -1<y<0,\\[1ex]
w_m'(0)&=&c\mu_-k^{-1}a_m,\\[1ex]
 w_m(-1)&=&0.
\end{array}
\right.
\end{equation} 
Since $w_m(y)=c\mu_-k^{-1}a_m(1+y),$ we find that
\[
\partial\mathcal{T}_2(0)[f]c|_{y=0}=\sum_{m\in\mathbb{Z}}c\mu_-k^{-1}a_me^{imx}.
\]
 Together with  \eqref{eq:SYS}, we then get
\[
\mathcal{S}_1(0){\mathop{\rm tr}\,} \partial\mathcal{T}_2(0)[f]c=\sum_{m\in\mathbb{Z}}c\mu_-k^{-1}\left(\frac{e^{-m}e^{my}}{e^{m}+e^{-m}}
+\frac{e^{m}e^{-my}}{e^{m}+e^{-m}}\right)a_me^{imx}.
\]
Whence, we have shown that $J_3$ is a Fourier multiplier of the following form
\[
J_3\sum_{m\in\mathbb{Z}}a_me^{imx}=\sum_{m\in\mathbb{Z}\setminus\{0\}}\frac{c\mu_-}{\mu_+}m\tanh(m)a_me^{imx}.
\]
To find an expansion for $J_2f$ we note first  that $\mathcal{T}_2(0)c=\mu_-c(1+y),$ which leads to $\mathcal{S}_1(f){\mathop{\rm tr}\,} \mathcal{T}_2(0)c=c\mu_-$ for all $f\in\mathcal{V}$.
This  implies that 
 $J_2f$ is the zero function.
Since $\mathcal{S}_1(0){\mathop{\rm tr}\,} \mathcal{T}_2(0)c=\mu_-c$, we finally get $J_1f=0.$
We have thus shown that $\partial\mathcal{G}(0)[f]c=J_3f$ 
for all $f\in h^{2+2{\mathop{\rm sign}\,}(\gamma)+\alpha}(\mathbb{S})$.
Summarising, we find the following relation 
\begin{equation}\label{eq:grea}
\partial_f\Phi_2(0,0)[f]=\sum_{m\in\mathbb{Z}}\left(\frac{\mu_-}{\mu_+}-1\right)\frac{km\tanh(m)}{\mu_++\mu_-\tanh^2(m)}c_2a_me^{imx},
\end{equation}
which leads, together with \eqref{eq:mi}, to \eqref{eq:final}.

We are now prepared to prove  the main result stated in Theorem \ref{T:main}.
The argumentation strongly   relies  on the regularity of $\Phi,$ relation \eqref{eq:final}, and well-known interpolation properties of the 
small H\"older spaces
\begin{equation}\label{interpolation}
(h^{\sigma_0}(\mathbb{S}),h^{\sigma_1}(\mathbb{S}))_\theta=h^{(1-\theta)\sigma_0+\theta\sigma_1}
(\mathbb{S}),
\end{equation}
if $  \theta\in(0,1)$ \text{and} $(1-\theta)\sigma_0+\theta\sigma_1\notin\mathbb{N}.$

\begin{proof}[Proof of Theorem~{\rm \ref{T:main}}] 
Assume first  $\gamma=0.$ 
Using  \cite[Theorem 3.4]{EM1} we obtain that the Fr\'echet derivative
$\partial_f\Phi(0,0)$, which has order 1, generates a strongly continuous and analytic semigroup in 
$\mathcal{L}(h^{1+\beta}(\mathbb{S}))$ for all $\beta\in(0,1),$
if $g_i(0)=c_i\in\mathbb{R}$, $i=1,2,$ and \eqref{eq:cond} is satisfied.
More precisely, with  the notation  used in \cite{Am}, this is written as $-\partial_f\Phi(0,0)\in\mathcal{H}(h^{2+\beta}(\mathbb{S}), h^{1+\beta}(\mathbb{S})).$

Pick some $\beta<\alpha.$
Since $\mathcal{H}(h^{2+\beta}(\mathbb{S}), h^{1+\beta}(\mathbb{S}))$ is an open subset of the space of bounded and linear operators
$\mathcal{L}(h^{2+\beta}(\mathbb{S}), h^{1+\beta}(\mathbb{S}))$ we find by continuity  open neighbourhoods of the zero function
$\mathcal{O}_i\subset h^{i+\alpha}(\mathbb{S})$, $i\in\{1,2\}$, and $\mathcal{O}\subset\mathcal{V}$ 
such that 
$-\partial_f\Phi(t,f)\in\mathcal{H}(h^{2+\beta}(\mathbb{S}), h^{1+\beta}(\mathbb{S}))$ 
for all $f\in\mathcal{O}$ and $g_i(t)\in c_i+\mathcal{O}_i.$
The existence result  follows now in virtue of \cite[Theorem 8.4.1]{L} and relation \eqref{interpolation}.
The regularity assertion is obtained by using  \cite[Corollary 8.4.6]{L}.

When $\gamma>0$  there are no restrictions on the smallness of $g_i, 1\leq i\leq2.$
This is due to the fact that the derivative $\partial_f\Phi(t,f) $ is a third order operator, and all  terms not containing $\gamma$ are treated as lower order perturbations.
In view of \cite[Proposition 2.4.1]{L} we obtain the desired local well-posedness result.

If  $\gamma=0,$ $g_i\in\mathbb{R}, i=1,2,$ and \eqref{eq:cond2} is fulfilled, then the linearised problem 
\[
\partial_th=\partial_f\Phi(0,0)h,\qquad h(0)=0, 
\]
is ill-posed in the sense of Hadamard in $h^{1+\alpha}(\mathbb{S}),$ since $\partial_f\Phi(0,0)$ does not generate a strongly continuous  semigroup in this case.
This completes the proof.
\qed
\end{proof}

\section{Stability properties}\label{S:6}
In this section we assume that $g_1\equiv const.$ and $g_2\equiv 0.$
Then, the flat interface $f_*\equiv0$ is, in view of \eqref{eq:sim}, a steady-state solution with the constant pressure distributions $u_-=u_+=g_1.$  
Note that  in this case we deal with an autonomous equation since the operator $\Phi$ in \eqref{eq:CP} does no longer depend on the time variable $t$ ($\Phi_2=0$). 
In the following we are interested in the stability properties of the equilibrium $f_*.$
Before doing this, we observe that:
\begin{rem}[Conservation of volume]\label{R:1} Let $g_1\in\mathbb{R}$ and $g_2\equiv 0.$
If $f$ is a solution of problem \eqref{eq:S} then
\[
\int_{\mathbb{S}} f(t,x)\,dx=const.
\]
as long as the solution to \eqref{eq:S} exists.
\end{rem}
\begin{proof}
See the discussion  below relation \eqref{eq:sim}.
\qed
\end{proof}

Following Remark \ref{R:1}, when studying the stability properties of the equilibrium $f_*$,
we are led to the assumption $f_0\in h^{2+2{\mathop{\rm sign}\,}(\gamma)+\alpha}_0(\mathbb{S}),$
where given $m\in\mathbb{N},$ the space  $h^{m+\alpha}_0(\mathbb{S})$ consists of the functions in $h^{m+\alpha}(\mathbb{S})$   having integral mean equal to $0$.
Hence, we have to restrict our problem to the set $\mathcal{V}_0:=\mathcal{V}\cap h^{4+\alpha}_0(\mathbb{S}).$

\begin{lemma}\label{L:11} Given $f\in\mathcal{V}_0,$ we have that $\Phi(f_0)\in h^{1+\alpha}_0(\mathbb{S}).$
\end{lemma}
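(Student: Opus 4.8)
The plan is to show that $\Phi(f)$ has vanishing integral mean for every $f\in\mathcal{V}_0$. Since in this section $g_2\equiv0$ and $g_1$ is constant, $\Phi_2=0$, so $\Phi(f)=\Phi_1(f)=-\left(\mathcal{G}(f)\right)^{-1}\mathcal{B}_+(f)\mathcal{S}_3(f)$. The crucial observation is that applying $\left(\mathcal{G}(f)\right)^{-1}$ to an element of $h^{1+\alpha}_0(\mathbb{S})$ again yields an element of $h^{1+\alpha}_0(\mathbb{S})$; this follows because, as seen in the proof of Lemma~\ref{L:1}, if $\mathcal{G}(f)q=p$ then $p-q=k\mu_+^{-1}\sqrt{1+f'^2}\,\partial_\nu u_+$ where $u_+$ solves a harmonic Neumann-type problem on $\Omega_+(f)$, so that Stokes' theorem gives $\int_{\mathbb{S}}(p-q)\,dx=\int_{\mathbb{S}}\sqrt{1+f'^2}\,\partial_\nu u_+\,dx=-\int_{\Gamma_1}\partial_{\overline\nu}u_+=0$ (using $\Delta u_+=0$ in $\Omega_+(f)$ and $\partial_{\overline\nu}u_+=0$ on $\Gamma_1$). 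Hence $q$ has zero mean whenever $p$ does, i.e. $\left(\mathcal{G}(f)\right)^{-1}\big(h^{1+\alpha}_0(\mathbb{S})\big)\subset h^{1+\alpha}_0(\mathbb{S})$.

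Given this, it suffices to prove that $\mathcal{B}_+(f)\mathcal{S}_3(f)\in h^{1+\alpha}_0(\mathbb{S})$. First I would unwind the definitions: $w:=\mathcal{S}_3(f)=(\mathcal{A}_+(f),\mathcal{B}(f),{\mathop{\rm tr}\,})^{-1}(0,0,\gamma\kappa(f)+\varpi f)$, so that the function $u_+:=\phi_*^{f+}w$ is harmonic in $\Omega_+(f)$, satisfies $\partial_{\overline\nu}u_+=0$ on $\Gamma_1$ (this is the meaning of $\mathcal{B}(f)w=0$), and has trace $\gamma\kappa_{\Gamma(f)}+\varpi f$ on $\Gamma(f)$. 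By the definition of $\mathcal{B}_+(f)$ we have $\mathcal{B}_+(f)w=k\mu_+^{-1}\sqrt{1+f'^2}\,\partial_\nu u_+$ on $\Gamma(f)$, so by the same Stokes' theorem computation used above (and already carried out in the discussion after \eqref{eq:sim}),
\begin{equation*}
\int_{\mathbb{S}}\mathcal{B}_+(f)w\,dx=\frac{k}{\mu_+}\int_{\mathbb{S}}\sqrt{1+f'^2}\,\partial_\nu u_+\,dx=-\frac{k}{\mu_+}\int_{\Gamma_1}\partial_{\overline\nu}u_+=0,
\end{equation*}
where the middle equality is the divergence theorem applied to the harmonic field $\nabla u_+$ on $\Omega_+(f)$, whose boundary consists of $\Gamma(f)$ and $\Gamma_1$, and the last equality uses $\partial_{\overline\nu}u_+=0$ on $\Gamma_1$. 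Therefore $\mathcal{B}_+(f)\mathcal{S}_3(f)\in h^{1+\alpha}_0(\mathbb{S})$, and applying the mean-preserving inverse $\left(\mathcal{G}(f)\right)^{-1}$ we conclude $\Phi(f)=\Phi_1(f)\in h^{1+\alpha}_0(\mathbb{S})$.

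I expect the only mildly delicate point to be the bookkeeping between the pulled-back objects on the fixed domains $\Omega_\pm$ and the geometric objects on $\Omega_\pm(f)$: one must check that $\mathcal{B}(f)w=0$ really corresponds to the homogeneous Neumann condition $\partial_{\overline\nu}u_+=0$ on $\Gamma_1$ (which is immediate from the formula $\mathcal{B}(f)v_+=(1-f)^{-1}{\mathop{\rm tr}}_1\partial_yv_+$ together with $\phi_{f+}(\Gamma_1)=\Gamma_1$), and that $\mathcal{B}_+(f)w$ pushes forward to $k\mu_+^{-1}\sqrt{1+f'^2}\,\partial_\nu u_+$ on $\Gamma(f)$ (which is exactly how $\mathcal{B}_\pm$ was defined via $\phi_{f\pm}^*\langle\nabla(\phi_*^{f\pm}v_\pm)\,|\,(-f',1)\rangle$, since $(-f',1)$ is the non-normalised normal to $\Gamma(f)$). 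Once this correspondence is in place the result is just the divergence theorem, exactly as in the computation displayed after \eqref{eq:sim}; no new estimates are needed. An alternative, purely "on the fixed domain" route would be to integrate $\mathcal{A}_+(f)w=0$ against the constant function and use the structure of the Laplace--Beltrami operator together with the boundary conditions, but the geometric argument above is cleaner.
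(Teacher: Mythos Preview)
Your proof is correct and follows essentially the same route as the paper's: both arguments reduce to the divergence theorem applied to a harmonic function on $\Omega_+(f)$ with homogeneous Neumann data on $\Gamma_1$. The only cosmetic difference is in how the invariance of $h^{1+\alpha}_0(\mathbb{S})$ under $\mathcal{G}(f)^{-1}$ is obtained: the paper shows directly that $\mathcal{B}_+(f)\mathcal{S}_1(f){\mathop{\rm tr}\,}\mathcal{T}_2(f)q$ has zero mean for \emph{every} $q$ (so $\mathcal{G}(f)=\mathrm{id}-\mathcal{B}_+(f)\mathcal{S}_1(f){\mathop{\rm tr}\,}\mathcal{T}_2(f)$ automatically restricts to an isomorphism of $h^{1+\alpha}_0(\mathbb{S})$), whereas you instead invoke the system \eqref{eq:S1S}--\eqref{eq:S2S} from Lemma~\ref{L:1} to conclude $\int_{\mathbb{S}}(q-p)\,dx=0$. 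These are the same Stokes computation in slightly different clothing. (A trivial sign slip: from \eqref{eq:S2S} one has $q-p=k\mu_+^{-1}\sqrt{1+f'^2}\,\partial_\nu u_+$, not $p-q$; this does not affect the argument.)
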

\begin{proof}
We begin by showing  that $\mathcal{G}(f)\in\mathcal{L}{is}(h^{1+\alpha}_0(\mathbb{S}))$ for all $f\in\mathcal{V}_0,$ which is equivalent to showing that
$\mathcal{B}_+(f)\mathcal{S}_1(f){\mathop{\rm tr}\,} \mathcal{T}_2(f)q\in h^{1+\alpha}_0(\mathbb{S})$ for all $q\in h^{1+\alpha}_0(\mathbb{S})$ and $f\in\mathcal{V}_0.$
Indeed, setting $v_+:=\mathcal{S}_1(f){\mathop{\rm tr}\,}\mathcal{T}_2(f)q$, we have
\begin{align*}
&\frac{\mu_+}{k}\int_\mathbb{S}\mathcal{B}_+(f)\mathcal{S}_1(f){\mathop{\rm tr}\,}\mathcal{T}_2(f)q\, dx=\int_{\mathbb{S}}(-f'(x),1)\cdot\nabla(\phi_*^{f+}v_+)(x,f(x))\, dx\\[1ex]
&\phantom{space}=\frac{1}{2\pi}\int_{\Gamma(f)}\partial_\nu\left(\phi_*^{f+}v_+\right)\,ds\\[1ex]
&\phantom{space}=\frac{1}{2\pi}\int_{\Omega_+(f)}\Delta\left(\phi_*^{f+}v_+\right)\, dx-\frac{1}{2\pi}\int_{\Gamma_{1}}\partial_\nu\left(\phi_*^{f+}v_+\right)\,ds=0.
\end{align*} 

Lastly, we still have to verify that $\mathcal{B}_+(f)\mathcal{S}_3(f)\in h^{1+\alpha}_0(\mathbb{S})$ for all $f\in\mathcal{V}_0.$
However, this last statement follows by repeating the integration steps presented above.
The desired assertion is now  obtained in virtue of \eqref{eq:Phi1}.
\qed
\end{proof}

We have thus reduced our problem to an autonomous evolution equation 
\begin{equation}\label{eq:auto}
\partial_tf=\Phi(f), \qquad f(0)=f_0,
\end{equation}
where  $\Phi:\mathcal{V}_0\subset h^{2+2{\mathop{\rm sign}\,}(\gamma)+\alpha}_0(\mathbb{S})\to h^{1+\alpha}_0(\mathbb{S}).$
From \eqref{eq:final} and \eqref{eq:final2}, we see that the spectrum of Fr\'echet derivative 
$\partial\Phi(0)\in\mathcal{L}(h^{2+2{\mathop{\rm sign}\,}(\gamma)+\alpha}_0(\mathbb{S}), h^{1+\alpha}_0(\mathbb{S}))$ consists only of the eigenvalues
$\partial\Phi(0)=\{\lambda_m\,:\, 1\leq m\in\mathbb{N}\},$
where  $\lambda_m$ are given by \eqref{eq:final2} with $c_2=0.$
In virtue of \cite[Theorem 9.1.2]{L} and the equivalence of problems \eqref{eq:S} and \eqref{eq:auto} for initial data $f_0\in h^{2+2{\mathop{\rm sign}\,}(\gamma)+\alpha}_0(\mathbb{S})$ we get:

\begin{thm}[Exponential stability] \label{T:main2}
Assume that 
\begin{equation}\label{eq:cond1}
g(\rho_+-\rho_-)<\gamma.
\end{equation} 
Then, the flat equilibrium $f_*\equiv0$ is exponentially stable.
More precisely, given $\omega\in (0,\tanh(1)k(\gamma-\varpi)/(\mu_++\mu_-\tanh(1)))$, there exist positive constants $\delta, M$ such that for all $f_0\in h^{2+2{\mathop{\rm sign}\,}(\gamma)+\alpha}_0(\mathbb{S})$ with $\|f_0\|_{h^{2+2{\mathop{\rm sign}\,}(\gamma)+\alpha}(\mathbb{S})}\leq \delta,$
the solution to \eqref{eq:S} exists in the large and 
\[
\|f(t)\|_{h^{2+2{\mathop{\rm sign}\,}(\gamma)+\alpha}(\mathbb{S})}+\|\partial_tf(t)\|_{h^{1+\alpha}(\mathbb{S})}\leq Me^{-\omega t}\|f_0\|_{h^{2+2{\mathop{\rm sign}\,}(\gamma)+\alpha}(\mathbb{S})}
\]
for all $t\geq 0.$

Moreover, if $g(\rho_+-\rho_-)>\gamma>0,$ this stationary solution is unstable.
\end{thm}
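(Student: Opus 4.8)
The plan is to deduce both assertions from the spectral picture of the linearisation $\partial\Phi(0)$ together with the principle of linearised (in)stability for quasilinear parabolic problems. The structural prerequisites are already at hand: by the discussion preceding the proof of Theorem~\ref{T:main} the map $\Phi$ is analytic near $0$; by the proof of Theorem~\ref{T:main} --- where $\gamma=0$ is admissible, since \eqref{eq:cond1} then forces $\varpi<0$ so that \eqref{eq:cond} holds with $c_2=0$ --- the operator $\partial\Phi(0)$ generates a strongly continuous analytic semigroup on $h^{1+\alpha}(\mathbb{S})$ with domain $h^{2+2{\mathop{\rm sign}\,}(\gamma)+\alpha}(\mathbb{S})$; and by Lemma~\ref{L:11} all operators entering $\Phi$ preserve the zero--mean subspaces, so the realisation of $\partial\Phi(0)$ on $h^{1+\alpha}_0(\mathbb{S})$ with domain $h^{2+2{\mathop{\rm sign}\,}(\gamma)+\alpha}_0(\mathbb{S})$ is again the generator of an analytic semigroup --- working with zero--mean data being legitimate because of Remark~\ref{R:1}. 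As recorded just below Lemma~\ref{L:11}, the spectrum of this realisation is exactly $\{\lambda_m:\,m\in\mathbb{N},\ m\ge1\}$, with $\lambda_m$ given by \eqref{eq:final2} for $c_2=0$, i.e. $\lambda_m=\frac{km\tanh(m)}{\mu_++\mu_-\tanh^2(m)}(\varpi-\gamma m^2)$, each $\lambda_m$ being the eigenvalue of the two--dimensional eigenspace $\spa\{\cos(mx),\sin(mx)\}$.

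For the first statement, assume \eqref{eq:cond1}. Then for every $m\ge1$ we have $\gamma m^2-\varpi\ge\gamma-\varpi>0$, and, using $\tanh^2(m)\le\tanh(m)$ together with the fact that the product $m\mapsto m\cdot\tanh(m)/(\mu_++\mu_-\tanh(m))$ is nonnegative and nondecreasing on $[1,\infty)$ (the second factor being increasing in $\tanh(m)$, hence in $m$),
\begin{align*}
-\lambda_m=\frac{km\tanh(m)(\gamma m^2-\varpi)}{\mu_++\mu_-\tanh^2(m)}
&\ge\frac{km\tanh(m)(\gamma-\varpi)}{\mu_++\mu_-\tanh(m)}\\
&\ge\frac{k\tanh(1)(\gamma-\varpi)}{\mu_++\mu_-\tanh(1)}=:\beta_0>0 .
\end{align*}
Hence the spectral bound of $\partial\Phi(0)$ on $h^{1+\alpha}_0(\mathbb{S})$ is at most $-\beta_0<0$, and for any $\omega\in(0,\beta_0)$ the associated analytic semigroup decays at rate $\omega$ in the relevant continuous interpolation spaces. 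The principle of linearised stability \cite[Theorem~9.1.2]{L} then yields the exponential stability of $f_*\equiv0$ for the autonomous problem \eqref{eq:auto}, with the asserted estimate for $\|f(t)\|_{h^{2+2{\mathop{\rm sign}\,}(\gamma)+\alpha}}$; the bound for $\|\partial_tf(t)\|_{h^{1+\alpha}}$ follows from $\partial_tf=\Phi(f)$, the local Lipschitz continuity of $\Phi$, and $\Phi(f_*)=0$. Translating back via the equivalence of \eqref{eq:S} and \eqref{eq:auto} for initial data in $h^{2+2{\mathop{\rm sign}\,}(\gamma)+\alpha}_0(\mathbb{S})$ (Proposition~\ref{P:parabolic}, Lemma~\ref{L:11}) gives the claim for solutions of \eqref{eq:S}.

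For the second statement, assume $\gamma>0$ and $\varpi>\gamma$. Then $\lambda_1=k\tanh(1)(\varpi-\gamma)/(\mu_++\mu_-\tanh^2(1))>0$, so $\sigma(\partial\Phi(0))\cap\{\re z>0\}\neq\emptyset$; moreover $\lambda_m<0$ once $m^2>\varpi/\gamma$, so the unstable part of the spectrum is a finite set of eigenvalues of finite multiplicity, spectrally separated from the rest, which lies in $\{\re z\le0\}$ (and strictly to the left of $i\mathbb{R}$ apart from the at most one index $m$ with $\gamma m^2=\varpi$, giving a harmless centre direction $\lambda_m=0$). Since $\gamma>0$ the operator $\partial\Phi(0)$ is a third--order Fourier multiplier generating an analytic semigroup, so the instability part of the principle of linearised stability \cite[Theorem~9.1.2]{L} applies and shows that $f_*\equiv0$ is unstable for \eqref{eq:S}.

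Once Lemmas~\ref{L:esse} and~\ref{L:11} are available the argument is essentially bookkeeping. The only computational point is the short chain of elementary inequalities above, which is arranged precisely so as to produce the decay threshold stated in the theorem; on the instability side the one thing to check is that the spectral configuration ($\lambda_1>0$, at most finitely many $\lambda_m\ge0$, the remainder in a left half--plane) meets the hypotheses of the abstract instability result, which it does because $\lambda_m\to-\infty$. I do not foresee a genuine obstacle beyond these verifications.
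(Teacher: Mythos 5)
Your proposal is correct and follows essentially the same route as the paper: reduce to the autonomous evolution equation on the zero--mean subspace, compute the spectrum $\{\lambda_m:m\ge1\}$ of $\partial\Phi(0)$ via \eqref{eq:final2}, and invoke the principle of linearised stability/instability from Lunardi (Theorem 9.1.2). The one thing you supply that the paper leaves implicit is the short monotonicity argument pinning the spectral bound to $-\tanh(1)k(\gamma-\varpi)/(\mu_++\mu_-\tanh(1))$, which is exactly the decay threshold asserted in the theorem.
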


\begin{rem} 
Let us notice that if surface tension effects are considered, then the equilibrium $f_*$ is stable also when the heavier fluid lies above, provided that 
the density jump across  the interface is small compared with the surface tension coefficient, cf. \eqref{eq:cond1}.
If we neglect the surface tension, then the equilibrium is stable only if the heavier fluid  occupies the lower region of the cell.
Moreover, this is also the only possible case which can be analysed, since via \eqref{eq:cond} we must have
$0>g(\rho_+-\rho_-)$ to ensure local well-posedness in this case. 
In the unstable case $g(\rho_+-\rho_-)>\gamma>0$ pattern formation is evidenced in \cite{HLS} by means of numerical simulations.
\end{rem}

\section{Steady-state fingering patterns and instability}

In this last section we have a closer look at the  stationary solutions of 
 problem \eqref{eq:S} under the same constant boundary conditions, $g_1\equiv const.$ and $g_2\equiv 0,$ as in the previous section.
 We still assume that the cell contains equal amounts  of both fluids, meaning that we are interested in determining the stationary solutions of the autonomous problem \eqref{eq:auto}. 
First of all, we notice that if $f\in \mathcal{V}_0$ is a stationary solution of \eqref{eq:S}, then the potentials $u_-$
and $u_+$ are both constant.
This is due to the fact that they are both solutions of elliptic problems with homogeneous Neumann boundary conditions.
We are led by the fifth equation of system \eqref{eq11} 
  to the problem of determining the functions solving
\begin{equation}\label{eq:EQ}
\gamma\frac{f''}{(1+f'^2)^{3/2}}+\varpi f=const.\quad\text{and}\quad\int_{\mathbb{S}} f\, dx=0.
\end{equation}
Recall by \eqref{eq:notation} that $\varpi=g(\rho_+-\rho_-)$ is the constant which measures the density jump across the 
interface 
separating the fluids and   parametrised by the function $f.$
If the density of the  fluid  on the bottom of the cell is greater or equal then that of the fluid above,
 then \eqref{eq:EQ}  has only the trivial  solution $f_*\equiv 0$ (see e.g. \cite{EM5}).
 When $\varpi>0, $ we shall  use a bifurcation argument with the surface tension coefficient $\gamma$ as bifurcation parameter and obtain 
infinitely many global bifurcation branches consisting only of stationary solutions of \eqref{eq:S}.
Of course, in this situation we   consider   the operator $\Phi$ in \eqref{eq:auto} to depend also on $\gamma,$  i.e. $\Phi=\Phi(\gamma,f).$
However it turns out to be   more convenient to treat \eqref{eq:EQ} instead of the operator equation
$\Phi(\gamma,f)=0$ in $ h^{1+\alpha}_0(\mathbb{S}).$

We fix  $\varpi>0$  and look for $(\gamma,f)\in(0,\infty)\times C^2(\mathbb{S})$ with $\|f\|_{C(\mathbb{S})}<1$ which solve \eqref{eq:EQ}. 
Therefore, we introduce an operator which  enables us to consider both equations of \eqref{eq:EQ} at once.
Since all the solutions of \eqref{eq:EQ} are smooth,   for even functions, the problem of finding the solutions of \eqref{eq:EQ}
is equivalent to determining the solutions of the equation
\begin{equation}\label{eq:Up}
\Upsilon(\gamma,f)=0,
\end{equation}
in $(0,\infty)\times\mathcal{U},$ where 
$\mathcal{U}:=\{f\in C^{3+\alpha}_{0,e}(\mathbb{S})\,:\,\|f\|_{C(\mathbb{S})}<1\}$
and the operator $\Upsilon:(0,\infty)\times\mathcal{U}\subset\mathbb{R}\times C^{3+\alpha}_{0,e}(\mathbb{S})\to C^\alpha_{odd}(\mathbb{S})$ is  by  definition  the derivative of 
the left hand side of the first equation of \eqref{eq:EQ}
\[
\Upsilon(\gamma,f):=\gamma \frac{f'''}{(1+f'^2)^{3/2}}-3\gamma\frac{f'f''^2}{(1+f'^2)^{5/2}}+\varpi f'
\]
for $(\gamma,f)\in (0,\infty)\times \mathcal{U}.$
The space $C^{3+\alpha}_{0,e}(\mathbb{S})$ is the subspace of $C^{3+\alpha}(\mathbb{S})$ consisting only of even functions with integral mean $0$, and analogously $C^{\alpha}_{odd}(\mathbb{S})$
consists only of the odd functions in $C^{\alpha}(\mathbb{S}).$
Clearly,  $\Upsilon $ depends analytically  on its variables and $\Upsilon(\gamma,0)=0$ for all $\gamma\in(0,\infty).$  
Its Fr\'echet derivative $\partial_f \Upsilon(\gamma,0)$ is a Fourier multiplication operator with
\begin{equation}\label{eq:FD}
\partial_f\Upsilon(\gamma,0)\left[\sum_{m=1}^\infty a_m\cos(mx)\right]=\sum_{m=1}^\infty \left(\gamma m^3-\varpi m\right)a_m\sin(mx)
\end{equation}
 for all $f=\sum_{m=1}^\infty a_m\cos(mx)\in C^{3+\alpha}_{0,e}(\mathbb{S}).$
Given $l\in\mathbb{N}\setminus\{0\},$ we set 
\begin{equation}\label{eq:ref}
\overline\gamma_l:=\varpi l^{-2}.
\end{equation}
\begin{figure}
$$\includegraphics[width=5cm]{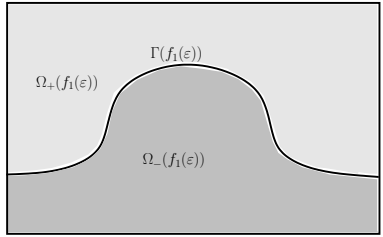}$$
\label{F:B33}
\caption{Steady oil finger penetrating  water.}
\end{figure}
The first result of this section is the following global bifurcation theorem, which states that 
a global bifurcation branch emerges from the trivial flat solution $\{(\gamma,0)\,:\, \gamma>0\}$   at $(\overline\gamma_l,0)$ for all $1\leq l\in\mathbb{N}$,
where $\overline\gamma_l$  is defined by \eqref{eq:ref},
provided $\varpi>0.$
\begin{thm}[Steady-state fingering solutions]\label{T:P3} Let $\varpi>0$ and $l\geq 1$.
 The point $(\overline\gamma_l,0)$ belongs to the closure $\mathcal{R}$ of the set of nontrivial solutions of \eqref{eq:Up} in $(0,\infty)\times\mathcal{U}.$
 Denote by $\mathcal{C}_l$ the connected component of $\mathcal{R}$ to which $(\overline\gamma_l,0)$ belongs.
 Then  $\mathcal{C}_l$ is unbounded in $(0,\infty)\times\mathcal{U}$.
 
 Additionally,  $\mathcal{C}_l$ has, in a small neighbourhood of $(\overline\gamma_l,0),$ an analytic  parame\-trisation
 $(\gamma_l,f_l):(-\delta_l,\delta_l)\to (0,\infty)\times\mathcal{U},$ and 
 \begin{align*} 
\gamma_l(\varepsilon)&=\overline\gamma_l +\frac{3 \varpi}{8}\varepsilon^2+O(\varepsilon^4),\\[1ex]
f_l(\varepsilon)&=\varepsilon\cos(lx)+O(\varepsilon^2),\quad \text{for}\quad \varepsilon\to0. 
 \end{align*}
 
Moreover, any other pair $(\gamma,0)$, with $\gamma>0$, is not a bifurcation point.
\end{thm}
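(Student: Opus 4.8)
The plan is to treat equation \eqref{eq:Up} as an analytic bifurcation problem for $\Upsilon$ with bifurcation parameter $\gamma$ and to combine the local Crandall--Rabinowitz theorem with a global continuation argument. For the local part I would verify the Crandall--Rabinowitz hypotheses at $(\overline\gamma_l,0)$. By \eqref{eq:FD} the linearisation $L:=\partial_f\Upsilon(\overline\gamma_l,0)$ is $Lh=\overline\gamma_l h'''+\varpi h'$, acting from $C^{3+\alpha}_{0,e}(\mathbb S)$ to $C^\alpha_{odd}(\mathbb S)$; since $\overline\gamma_l m^3-\varpi m$ vanishes for a positive integer $m$ exactly when $m^2=\varpi/\overline\gamma_l=l^2$, the kernel of $L$ is the one-dimensional span of $\cos(lx)$. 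Using threefold integration on the circle together with the parity and mean-value constraints one checks that $h\mapsto h'''$ is an isomorphism $C^{3+\alpha}_{0,e}(\mathbb S)\to C^\alpha_{odd}(\mathbb S)$; as $h\mapsto\varpi h'$ is relatively compact, $L$ is Fredholm of index zero with range the codimension-one subspace $\{\sum_{m\geq1}b_m\sin(mx):b_l=0\}$. The transversality condition holds because $\partial_\gamma\partial_f\Upsilon(\overline\gamma_l,0)[\cos(lx)]=(\cos(lx))'''=l^3\sin(lx)$ is not in this range. Since $\Upsilon$ is analytic, the analytic version of the Crandall--Rabinowitz theorem (Dancer; see also Buffoni--Toland) yields the analytic curve $\varepsilon\mapsto(\gamma_l(\varepsilon),f_l(\varepsilon))$ through $(\overline\gamma_l,0)$ with $\gamma_l(0)=\overline\gamma_l$ and $f_l(\varepsilon)=\varepsilon\cos(lx)+O(\varepsilon^2)$; in particular $(\overline\gamma_l,0)\in\mathcal R$.

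\emph{The expansion of $\gamma_l$.} The identity $\Upsilon(\gamma,-f)=-\Upsilon(\gamma,f)$ forces $\partial_f^2\Upsilon(\gamma,0)=0$ and makes the nontrivial solution set invariant under $f\mapsto-f$, so the branch can be normalised so that $f_l(-\varepsilon)=-f_l(\varepsilon)$ and $\gamma_l$ is an even function of $\varepsilon$; this already gives the $O(\varepsilon^4)$ error term. To obtain the $\varepsilon^2$-coefficient I would run a Lyapunov--Schmidt expansion: inserting $f_l(\varepsilon)=\varepsilon\cos(lx)+\varepsilon^2 w_2+\varepsilon^3 w_3+\cdots$ and $\gamma_l(\varepsilon)=\overline\gamma_l+\varepsilon^2\gamma_2+\cdots$ into $\Upsilon=0$, the order-$\varepsilon^2$ identity forces $w_2=0$ after the normalisation, and projecting the order-$\varepsilon^3$ identity onto the cokernel direction $\sin(lx)$ yields $\gamma_2 l^3+(\text{coefficient of }\sin(lx)\text{ in }C(\cos(lx)))=0$, where $C(h)=-\tfrac32\overline\gamma_l h'''h'^2-3\overline\gamma_l h'(h'')^2$ is the cubic part of $\Upsilon(\overline\gamma_l,\cdot)$. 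Reducing $\sin^3(lx)$ and $\sin(lx)\cos^2(lx)$ to first and third harmonics identifies that coefficient as $-\tfrac38\overline\gamma_l l^5$, whence $\gamma_2=\tfrac38\overline\gamma_l l^2=\tfrac38\varpi$ by \eqref{eq:ref}.

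\emph{Global continuation and the remaining claims.} For the global statement I would first recast the problem with a compact nonlinearity: solving $\Upsilon(\gamma,f)=0$ for $f'''$ yields $f'''=H(\gamma,f',f''):=\tfrac{3f'f''^2}{1+f'^2}-\tfrac{\varpi}{\gamma}f'(1+f'^2)^{3/2}$, so that $\Upsilon(\gamma,f)=0$ is equivalent to $f=\mathcal J[H(\gamma,f',f'')]$ with $\mathcal J=(h\mapsto h''')^{-1}$; since $H(\gamma,f',f'')\in C^{1+\alpha}(\mathbb S)$ whenever $f\in C^{3+\alpha}(\mathbb S)$, the map $f\mapsto\mathcal J[H(\gamma,f',f'')]$ takes values in $C^{4+\alpha}_{0,e}(\mathbb S)$ and is compact on $(0,\infty)\times\mathcal U$. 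Rabinowitz's global bifurcation theorem then gives that $\mathcal C_l$ is unbounded in $(0,\infty)\times\mathcal U$ unless it meets the trivial branch at some $(\overline\gamma_{l'},0)$ with $l'\neq l$. I would exclude the latter by a nodal count: every solution of \eqref{eq:Up} in $\mathcal U$ satisfies $\gamma\kappa(f)+\varpi f\equiv c$, hence possesses the first integral $-\gamma(1+f'^2)^{-1/2}+\tfrac{\varpi}{2}f^2-cf\equiv E$ for constants $c,E$, and a nontrivial $2\pi$-periodic, mean-zero solution cannot have a degenerate zero (a double zero forces $E=-\gamma$, whence $f(c-\tfrac{\varpi}{2}f)\geq0$ on all of $\mathbb S$, so $f$ has one sign and therefore $f\equiv0$); consequently the number of zeros of $f$ on $\mathbb S$ is locally constant along $\mathcal C_l$ and equals $2l$ near $(\overline\gamma_l,0)$, which is incompatible with $\mathcal C_l$ approaching any $(\overline\gamma_{l'},0)$, $l'\neq l$. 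Hence $\mathcal C_l$ is unbounded. Finally, if $\gamma\notin\{\overline\gamma_l:l\geq1\}$ all the numbers $\gamma m^3-\varpi m$ are nonzero, so $\partial_f\Upsilon(\gamma,0)$ is an isomorphism and the implicit function theorem leaves only the trivial solution near $(\gamma,0)$; thus no such point is a bifurcation point.

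The main obstacle in this programme is the global step: one must make sure that the problem is genuinely of the compact (or Fredholm--proper) type to which the Rabinowitz-type alternative applies, and that the nodal invariant extracted from the first integral is well defined and really propagates along the whole connected component, so that unboundedness remains the only possibility; by comparison, the Fredholm and transversality verifications and the Lyapunov--Schmidt bookkeeping that produces the constant $3\varpi/8$ are routine.
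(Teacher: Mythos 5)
Your proposal follows essentially the same route as the paper: verify the Crandall--Rabinowitz hypotheses at $(\overline\gamma_l,0)$ using \eqref{eq:FD}, rewrite \eqref{eq:Up} as an identity-plus-compact-operator equation $f=A^{-1}\bigl(\tfrac{3f'f''^2}{1+f'^2}-\tfrac{\varpi}{\gamma}f'(1+f'^2)^{3/2}\bigr)$ so that Rabinowitz's global alternative applies, compute $\gamma_l''(0)$ by projecting the cubic Taylor term onto the cokernel direction $\sin(lx)$ (your Lyapunov--Schmidt bookkeeping reproduces the paper's value $3\varpi/4$, hence the $3\varpi/8$ coefficient), and exclude the remaining bifurcation points via the isomorphism of $\partial_f\Upsilon(\gamma,0)$. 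The one place you go beyond the paper is in ruling out the second Rabinowitz alternative: the paper simply cites the companion work \cite[Theorems 3.2 and 4.1]{EEM} for ``methods from the theory of ordinary differential equations,'' whereas you sketch the argument yourself via the first integral $-\gamma(1+f'^2)^{-1/2}+\tfrac{\varpi}{2}f^2-cf\equiv E$ and the resulting nodal invariant; that sketch is plausible and almost certainly the content of \cite{EEM}, and you are right that making the ``locally constant nodal count propagates along the whole connected component'' step airtight is the one genuinely delicate point in the whole proof.
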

Possible stationary fingering pattern solutions of problem \eqref{eq:S} are pictured in Figure 2.
That $\mathcal{C}_l, $ $l\geq1,$ is unbounded in $(0,\infty)\times\mathcal{U}$ means that either $\mathcal{C}_l$ is unbounded in $\mathbb{R}\times C^{3+\alpha}(\mathbb{S})$, or that $\mathcal{C}_l$ reaches the boundary of $(0,\infty)\times\mathcal{U}.$
It should be mentioned that both situations  may occur \cite{EEM}.
 
\begin{rem}
For all $l\in\mathbb{N},$ $l\geq1,$ we have that $\gamma_l'(0)=0$ and $\gamma_l''(0)>0.$
Consequently, the bifurcation is supercritical.
The bifurcation diagram is pictured in Figure 3.
\begin{figure}
$$\includegraphics[width=7cm]{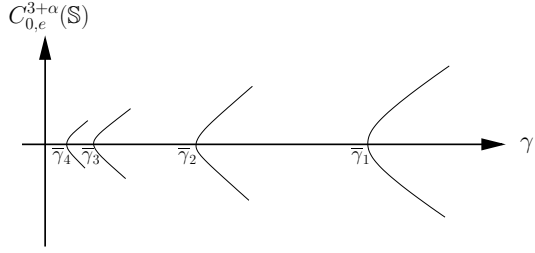}$$
\label{F:h33}
\caption{The  bifurcation diagram.}
\end{figure}
\end{rem}

Concerning the stability properties of these finger-shaped steady-state solution we state:
\begin{thm}[Instability of the fingering patterns]\label{T:instability}
Let $1\leq l\in\mathbb{N}$ be given. 
The stationary solution  $f_l(\varepsilon)$ of problem \eqref{eq:S}, when $\gamma=\gamma_l(\varepsilon)$, is unstable provided $0\neq |\varepsilon|$ is small enough.
\end{thm}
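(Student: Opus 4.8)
The plan is to use the principle of exchange of stabilities for bifurcating equilibria: along the bifurcation branch $(\gamma_l(\varepsilon), f_l(\varepsilon))$ emanating from $(\overline\gamma_l, 0)$, the linearization of the evolution equation $\partial_t f = \Phi(\gamma, f)$ at the new equilibrium inherits, for small $\varepsilon$, an eigenvalue that perturbs away from the zero eigenvalue present at the bifurcation point, and one shows this perturbed eigenvalue has positive real part. Concretely, I would first recall that at $\gamma = \overline\gamma_l = \varpi l^{-2}$ the linearization $\partial_f\Phi(\overline\gamma_l, 0)$ has the Fourier multiplier $\lambda_m = \frac{k|m|\tanh(|m|)}{\mu_+ + \mu_-\tanh^2(m)}(\varpi - \overline\gamma_l m^2)$ (with $c_2 = 0$), so $\lambda_l = 0$ is a simple eigenvalue with eigenfunction $\cos(lx)$, while $\lambda_m > 0$ for $1 \le m < l$ and $\lambda_m < 0$ for $m > l$. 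Thus the trivial solution already has unstable modes when $l \ge 2$; but the point of the theorem is that \emph{every} finger, including the primary $l = 1$ branch, is unstable, so the argument must track what happens to the $l$-th mode as we move onto the branch.

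The key steps, in order: (1) Set up the linearized operator $\partial_f\Phi(\gamma_l(\varepsilon), f_l(\varepsilon))$ acting on $h^{1+\alpha}_0(\mathbb{S})$ (or its even-function restriction, consistently with the bifurcation setup), noting it depends analytically on $\varepsilon$ by the analyticity of $\Phi$ established before Proposition~\ref{P:parabolic} together with the analytic parametrization of the branch from Theorem~\ref{T:P3}. (2) Differentiate the identity $\Phi(\gamma_l(\varepsilon), f_l(\varepsilon)) = 0$ (equivalently $\Upsilon(\gamma_l(\varepsilon), f_l(\varepsilon)) = 0$) in $\varepsilon$ to locate the eigenvalue near $0$: since $\partial_f\Phi(\overline\gamma_l, 0)$ has $0$ as a simple eigenvalue, analytic perturbation theory (Kato) yields an analytic family of simple eigenvalues $\mu(\varepsilon)$ with $\mu(0) = 0$ and corresponding eigenfunctions $\xi(\varepsilon)$ with $\xi(0) = \cos(lx)$. (3) Compute $\mu'(0)$ and, if it vanishes, $\mu''(0)$; because $\gamma_l'(0) = 0$ and $f_l(\varepsilon) = \varepsilon\cos(lx) + O(\varepsilon^2)$ is an even expansion in $\varepsilon$ (the branch has a $\mathbb{Z}/2$ reflection symmetry $\varepsilon \mapsto -\varepsilon$), one expects $\mu'(0) = 0$ and $\mu(\varepsilon) = \mu''(0)\varepsilon^2/2 + O(\varepsilon^4)$. (4) Evaluate the sign of $\mu''(0)$: relate it to $\gamma_l''(0) > 0$ via the exchange-of-stabilities formula. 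The cleanest route is to use that, near the bifurcation point, the eigenvalue $\mu(\varepsilon)$ and the slope $\gamma_l'(\varepsilon)$ of the branch are tied together — in the standard Crandall–Rabinowitz picture, $\mu(\varepsilon) \approx -\gamma_l'(\varepsilon)\,\partial_\gamma\partial_f\Phi(\overline\gamma_l,0)[\cos(lx)]$ projected onto the eigenfunction direction; since here $\gamma_l'(0) = 0$, one must go to second order and gets $\mu''(0)$ proportional to $-\gamma_l''(0)$ times $\frac{d}{d\gamma}\lambda_l\big|_{\gamma = \overline\gamma_l}$, and $\frac{d}{d\gamma}\lambda_l = -\frac{k l^3\tanh(l)}{\mu_+ + \mu_-\tanh^2(l)} < 0$. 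So $\mu''(0) > 0$, hence $\mu(\varepsilon) > 0$ for $0 < |\varepsilon|$ small. (5) Conclude instability: $\partial_f\Phi(\gamma_l(\varepsilon), f_l(\varepsilon))$ has an eigenvalue with positive real part, and since $-\partial_f\Phi$ at the equilibrium still belongs to $\mathcal{H}(h^{2+2\mathop{\rm sign}(\gamma)+\alpha}_0(\mathbb{S}), h^{1+\alpha}_0(\mathbb{S}))$ (being a small perturbation of the generator at $\varepsilon = 0$, which is of parabolic type by the same multiplier analysis as in the proof of Theorem~\ref{T:main}), the instability of the equilibrium follows from the principle of linearized instability for quasilinear parabolic equations, e.g. \cite[Theorem 9.1.3]{L}.

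The main obstacle is step (4): extracting the \emph{sign} of the perturbed eigenvalue without carrying out the full second-order bifurcation expansion. The difficulty is that $\gamma_l'(0) = 0$ means the leading-order exchange-of-stabilities relation is degenerate, so one genuinely needs the second-order coefficient, and one must be careful that the relevant quantity is $\gamma_l''(0)$ (known positive from Theorem~\ref{T:P3}) multiplied by the sign of $\partial_\gamma\lambda_l < 0$, with no other contributions at this order — this requires checking that the projection of the second $\varepsilon$-derivative of $\Phi(\gamma_l(\varepsilon), f_l(\varepsilon)) = 0$ onto $\ker(\partial_f\Phi(\overline\gamma_l,0))^*$ isolates exactly $\mu''(0)$ and $\gamma_l''(0)$ and nothing else. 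A convenient way to make this rigorous is to invoke the abstract result linking the stability of bifurcating solutions to the direction of bifurcation (as in, e.g., the results of Crandall–Rabinowitz on the principle of exchange of stabilities, or the version adapted to quasilinear parabolic problems), which directly gives that a supercritical pitchfork off an eigenvalue crossing from positive to negative (as $\gamma$ increases through $\overline\gamma_l$) produces \emph{unstable} bifurcating equilibria. Everything else — the analyticity of the linearized family, the parabolicity of the generator along the branch, and the passage from spectral information to nonlinear instability — is routine given the tools already set up in the paper.
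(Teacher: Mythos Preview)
Your proposal is correct and follows essentially the same route as the paper. The paper organizes the argument slightly more efficiently: it immediately dispatches the case $l\ge 2$ by noting that $\lambda_1(\overline\gamma_l)>0$ already gives a positive eigenvalue of $\partial_f\Phi(\overline\gamma_l,0)$ which persists by continuity along the branch, and then treats only the critical case $l=1$ via the Crandall--Rabinowitz exchange-of-stability theorem, using the limit formula $\lim_{\varepsilon\to0}\frac{-\varepsilon\gamma_1'(\varepsilon)\lambda'(\overline\gamma_1)}{\mu(\varepsilon)}=1$ directly rather than computing $\mu''(0)$ from scratch; but your sign analysis (via $\gamma_l''(0)>0$ and $\partial_\gamma\lambda_l<0$) and your eventual appeal to the same Crandall--Rabinowitz principle amount to the same argument.
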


\begin{proof}[Proof of Theorem~{\rm \ref{T:P3}}] 
It is well-known that bifurcation may occur  at $(\gamma,0)$ only if the derivative  $\partial_f\Upsilon(\gamma,0)$ is not an isomorphism, which in view of \eqref{eq:FD},
leads to $\gamma=\overline\gamma_l$ for some $1\leq l\in\mathbb{N}.$
One can easily verify  the assumption of the  theorem on bifurcations from simple eigenvalues due to Crandall and Rabinowitz \cite{CR}, as stated in \cite[Theorem 8.3.1]{BT},
and obtain that analytic bifurcation branches $(\gamma_l,f_l):(-\delta,\delta)\to (0,\infty)\times\mathcal{U}$ consisting entirely of steady-states solutions of \eqref{eq:S}, emerge at $(\overline\gamma_l,0)$  for all $1\leq l\in\mathbb{N}.$ 

In order to study the global behaviour of these branches we rewrite equation \eqref{eq:Up} as follows
\[
f'''-\frac{3f'f''^2}{1+f'^2}+\frac{\varpi}{\gamma}f'(1+f'^2)^{3/2}=0,\qquad (\gamma,f)\in(0,\infty)\times\mathcal{U}.
\]
We use now the property of  the operator $A:C^{3+\alpha}_{0,e}(\mathbb{S})\to C^\alpha_{odd}(\mathbb{S}),$ $Af=f''',$ to be an isomorphism.
By applying the inverse of $A$ to the equation above we  obtain an equivalent formulation for the problem \eqref{eq:Up}
\begin{equation}\label{eq:BGL}
F(\gamma,f)=f+H(\gamma,f)=0, 
\end{equation}
where $H:(0,\infty)\times\mathcal{U}\subset \mathbb{R}\times C^{3+\alpha}_{0,e}(\mathbb{S})\to C^{3+\alpha}_{0,e}(\mathbb{S})$ is the completely continuous operator
\[
H(\gamma,f)=A^{-1}\left(-\frac{3f'f''^2}{1+f'^2}+\frac{\varpi}{\gamma}f'(1+f'^2)^{3/2}\right).
\]
Using Fourier expansions for functions in $C^{3+\alpha}_{0,e}(\mathbb{S})$ as we did before, we conclude  that
the derivative $\partial_f F(\gamma,0),$ has an odd crossing number at $\gamma=\overline\gamma_l$ for all $1\leq l$, cf. \cite[Definition II.3.1]{HK}.
Particularly, the index $i(\partial_fF(\gamma,0),0)$ jumps at $\gamma=\overline\gamma_l$ from $1$ to $-1,$ or vice versa.
We infer from the global Rabinowitz bifurcation theorem \cite[Theorem II.3.3]{HK} that the  
  connected component $\mathcal{C}_l$ of $\mathcal{R}$ to which $(\overline\gamma_l,0)$ belongs, is either unbounded in $(0,\infty)\times\mathcal{U}$ or 
 contains some other bifurcation point $(\overline\gamma_k,0)$, with $k\neq l.$ 
 Using methods from the theory of ordinary differential equations, it is possible to show \cite[Theorems 3.2 and 4.1]{EEM} that the second alternative cannot occur, that is all branches $\mathcal{C}_l$
 are unbounded in $(0,\infty)\times\mathcal{U}$ and pairwise disjoint.

The derivative $\gamma_l'(0)$ is zero  due to the fact that $\Upsilon(\gamma,-f)=0$ for all $(\gamma,f)\in\mathcal{R}.$
Particularly, we have that $f_l(-\varepsilon)=-f_l(\varepsilon)$ and $\gamma_l(\varepsilon)=\gamma_l(-\varepsilon)$ for all $\varepsilon\in(0,\delta).$
We compute now the second derivative $\gamma_l''(0)$.
To this scope, for fixed $\gamma>0,$ we define $\phi:\mathbb{R}^3\to\mathbb{R}$  by the relation
\[
\phi(x,y,z)=\gamma \frac{z}{(1+x^2)^{3/2}}-3\gamma\frac{xy^2}{(1+x^2)^{5/2}}+\varpi x\quad\text{for $(x,y,z)\in\mathbb{R}^3$.}
\]
It holds that $\Upsilon(\gamma,f)=\phi(f',f'',f''')$ for all $f\in\mathcal{U}.$ 
Since $\partial^2_{ij}\phi(0)=0$ for all $1\leq i,j\leq 3$ we get that   $\partial^2_{ff}\Upsilon(\gamma,0)=0$.
Furthermore, the  third order partial derivatives of $\phi$ in $0$  all vanish, except for 
\begin{align*}
&\partial^3_{113}\phi(0)=-3\gamma,\quad \partial^3_{122}\phi(0)=-6\gamma,
\end{align*}
so that $\partial^3_{fff}\Upsilon(\gamma,0)[f,f,f]=-9\gamma f'^2f'''-18\gamma f'f''^2$ 
for all $\gamma>0$ and $f\in C^{3+\alpha}_0(\mathbb{S}).$ 
We may  write for the analytic parametrisation $f_l(\varepsilon)=\varepsilon\cos(lx)+\tau_l(\varepsilon),$
where $\tau_l(0)=\tau_l'(0)=0$, and $\tau_l(\varepsilon) $ belongs to the closed complement of ${\mathop{\rm Ker}\,} \partial_f\Upsilon(\overline\gamma_l,0)$ 
in $C^{3+\alpha}_{0,e}(\mathbb{S}).$
Differentiating  the relation $\Upsilon(\gamma_l(\varepsilon),\varepsilon\cos(lx)+\tau_l(\varepsilon))=0$ three times with respect to $\varepsilon$, at $\varepsilon=0$, yields,
in view of $\gamma_l'(0)=0$ and $\partial^2_{ff}\Upsilon(\overline\gamma_l,0)=0,$ that
\begin{align*} 
&\partial^3_{fff}\Upsilon(\overline\gamma_l,0)[\cos(lx)]^3+3\gamma_l''(0)\partial^2_{\gamma f}\Upsilon(\overline\gamma_l,0)[\cos(lx)]+\partial_f\Upsilon(\overline\gamma_l,0)[\tau'''(0)]=0.
\end{align*}
However, $\partial_f\Upsilon(\overline\gamma_l,0)$ has values  in the complement of $\mathbb{R}\cdot{\sin(lx)}$ in $C^{\alpha}_o(\mathbb{S}),$
so that, by multiplying the relation above by $\sin(lx),$ followed by integration over the unit circle, we get
\begin{align*}
\gamma_l''(0)&=-\frac{1}{3}\frac{\langle\partial^3_{fff}\Upsilon(\overline\gamma_l,0)[\cos(lx)]^3|\sin{lx}\rangle}
{\langle\partial^2_{\gamma f}\Upsilon(\overline\gamma_l,0)[\cos(lx)]|\sin{lx}\rangle}=\frac{3\overline\gamma_l l^2}{4}>0,
\end{align*}
where $\langle f|g\rangle:=\int_{\mathbb{S}} f\overline g\, dx$, $f,g\in L_2(\mathbb{S}),$ is the scalar product on $L_2(\mathbb{S}).$ 
In view of $\overline\gamma_l=\varpi/l^2,$ we conclude that $\gamma_l''(0)=3\varpi/4$ for all $l\geq1.$
This finishes the proof.
\qed
\end{proof}

Finally, we come to the proof of  Theorem \ref{T:instability}.
As in the previous section, the constant boundary conditions on $\Gamma_{\pm1}$ ensure that the operator  $\Phi$, defined by \eqref{eq:CP} (and which depends now also on $\gamma$),
 is independent of time $t$.
The existence of the local bifurcation branches $(\gamma_l,f_l)$ can be also obtained by applying the theorem on bifurcations from simple eigenvalues due to Crandall and Rabinowitz to
the operator equation
\begin{equation}\label{eq:aut*}
\Phi(\gamma,f)=0,\qquad (\gamma,f)\in (0,\infty)\times \mathcal{V}_{0,e}
\end{equation}
 where letting $ h^{m+\alpha}_{0,e}(\mathbb{S})=\{f\in h^{m+\alpha}(\mathbb{S})\,:\, \text{$f$ is even }\}\cap h^{m+\alpha}_0(\mathbb{S}),$
we set $\mathcal{V}_{0,e}:=\mathcal{V}\cap h^{4+\alpha}_{0,e}(\mathbb{S}).$ 
A maximum principle  argument shows  that $\Phi(\gamma,f)$ is even if $f\in\mathcal{V}_{0,e},$ meaning, in virtue of Lemma \ref{L:11}, that
$\Phi:(0,\infty)\times \mathcal{V}_{0,e}\to h^{1+\alpha}_{0,e}(\mathbb{S})$ is a well-defined mapping.
However, it is much more difficult to determine   the derivative   $\gamma_l''(0)$   when applying bifurcation theory to the operator equation \eqref{eq:aut*}.

When proving the instability  of the steady-state  solution $(\gamma_l(\varepsilon),f_l(\varepsilon)),$ $\varepsilon\in(-\delta_l,\delta_l),$ $ 1\leq l,$ for the problem \eqref{eq:auto} it is more accessible  to
show that  they  are unstable steady-state solutions of the restriction 
\begin{equation}\label{eq:CPred}
\partial_tf=\Phi(\gamma,f),\qquad f(0)=f_0,
\end{equation}
where $\Phi:(0,\infty)\times \mathcal{V}_{0,e}\to h^{1+\alpha}_{0,e}(\mathbb{S}).$
Therefore we must study how 
the spectrum of the linearised operator varies along the bifurcation curves.
Our main tool is the exchange of stability theorem of Crandall and Rabinowitz \cite{CR3}.
According to \cite{CR3}, we introduce first some notation.

\begin{defn} Let $\mathbb{E}, \mathbb{F}$ be Banach spaces and $T,K\in\mathcal{L}(\mathbb{E},\mathbb{F}).$
Then $\mu\in\mathbb{R}$ is a $K-$simple eigenvalue of $T$ if 
\[
\dim {\mathop{\rm Ker}\,} (T-\mu K)={\mathop{\rm codim}\,} (T-\mu K)=1,
\]
and, if  ${\mathop{\rm Ker}\,} (T-\mu K)={\mathop{\rm span}\,} \{u_*\},$ then $Ku_*\notin{\mathop{\rm Im}\,} (T-\mu K).$ 
\end{defn} 
The result proved by Crandall and Rabinowitz in \cite{CR3} reads as follows:

\begin{thm} [Crandall-Rabinowitz]\label{T:C-R}
Let $\mathbb{E}, \mathbb{F}$ be Banach spaces, $K\in\mathcal{L}(\mathbb{E},\mathbb{F})$ and $\mathcal{F}:(0,\infty)\times\mathbb{E}\to\mathbb{F}$ of class $C^2$ with $\mathcal{F}(\gamma,0)=0$ near $\gamma_*.$
If $0$ is a $K-$simple eigenvalue of  $T:=\partial_f \mathcal{F}(\gamma_*,0)$ and a $\partial^2_{\gamma f}\mathcal{F}(\gamma_*,0)-$ simple eigenvalue of $T,$ 
then there exists a local curve  $(\gamma(\varepsilon),f(\varepsilon))$ such  that $(\gamma(0), f(0))=(\gamma_*,0)$ and $\mathcal{F}(\gamma(\varepsilon), f(\varepsilon))=0$ 
Moreover, if $\mathcal{F}(\gamma,f)=0$, $f\neq0$, and $(\gamma,f)$ is close to $(\gamma_*,0),$ then $(\gamma,f)=(\gamma(\varepsilon),f(\varepsilon))$ for some $\varepsilon\neq0.$

Furthermore, there are real numbers $\mu(\varepsilon), \lambda(\gamma)$ and vectors $u(\varepsilon), $ $v(\gamma)\in\mathbb{E}$ such that
\begin{align*}
&\partial_f\mathcal{F}(\gamma(\varepsilon),f(\varepsilon))u(\varepsilon)=\mu(\varepsilon)Ku(\varepsilon),\quad \partial_f\mathcal{F}(\gamma,0)v(\gamma)=\lambda(\gamma)Kv(\gamma),
\end{align*}
with $\mu(0)=\lambda(0)=0$ and $u(0)=v(0).$ 
Each curve is $C^1$, with
\[
\lambda'(\gamma_*)\neq0 \quad \text{and} \quad \lim_{\varepsilon\to0, \mu(\varepsilon)\neq0}\frac{-\varepsilon\gamma'(\varepsilon)\lambda'(\gamma_*)}{\mu(\varepsilon)}=1. 
\] 
\end{thm}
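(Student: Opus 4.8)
The plan is to reduce everything, via Lyapunov--Schmidt and the implicit function theorem, to a handful of scalar identities obtained by pairing with a functional that annihilates $\im T$. Write $T:=\partial_f\mathcal{F}(\gamma_*,0)$ throughout. The $K$--simplicity of the eigenvalue $0$ forces $T$ to be a Fredholm operator of index zero, so I would fix a splitting $\E=\Kern T\oplus\E_0$ with $\Kern T=\mathbb{R}u_*$, a functional $\ell\in\E'$ with $\ell(u_*)=1$ and $\ell|_{\E_0}=0$, and a functional $\psi\in\F'$ with $\Kern\psi=\im T$. The existence of the curve $(\gamma(\varepsilon),f(\varepsilon))$, together with the fact that every nontrivial zero of $\mathcal{F}$ near $(\gamma_*,0)$ lies on it, is exactly the Crandall--Rabinowitz bifurcation-from-a-simple-eigenvalue theorem \cite{CR}: writing $f=\varepsilon(u_*+w)$ with $w\in\E_0$ and dividing $\mathcal{F}(\gamma,f)=0$ by $\varepsilon$, the quotient extends through $\varepsilon=0$ as $\int_0^1\partial_f\mathcal{F}(\gamma,t\varepsilon(u_*+w))(u_*+w)\,dt$, a $C^1$ map vanishing at $(\varepsilon,\gamma,w)=(0,\gamma_*,0)$ whose $(\gamma,w)$--differential there is $(\delta\gamma,\delta w)\mapsto\delta\gamma\,\partial^2_{\gamma f}\mathcal{F}(\gamma_*,0)u_*+T\delta w$; this is an isomorphism of $\mathbb{R}\times\E_0$ onto $\F$ precisely because $\partial^2_{\gamma f}\mathcal{F}(\gamma_*,0)u_*\notin\im T$, so the implicit function theorem produces $\gamma(\varepsilon),w(\varepsilon)$ of class $C^1$ with $\gamma(0)=\gamma_*$, $w(0)=0$, and $\mathcal{F}(\gamma(\varepsilon),\varepsilon(u_*+w(\varepsilon)))=0$; the same Lyapunov--Schmidt scheme gives the local uniqueness.

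For the eigenvalue data I would solve, on a neighbourhood of $(\gamma_*,0)$ in $\mathbb{R}\times\E$, the system $\partial_f\mathcal{F}(\gamma,f)u-\mu Ku=0$, $\ell(u)=1$ by the implicit function theorem: its linearisation in $(\mu,u)$ at $(\gamma_*,0,0,u_*)$, namely $(\delta\mu,\delta u)\mapsto T\delta u-\delta\mu\,Ku_*$ with $\delta u\in\E_0$, is an isomorphism onto $\F$ since $Ku_*\notin\im T$, and because $\mathcal{F}\in C^2$ the operator $\partial_f\mathcal{F}$ is $C^1$, so one gets $C^1$ maps $(\gamma,f)\mapsto(\mu(\gamma,f),u(\gamma,f))$ with $\mu(\gamma_*,0)=0$, $u(\gamma_*,0)=u_*$. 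Setting $\lambda(\gamma):=\mu(\gamma,0)$, $v(\gamma):=u(\gamma,0)$ and $\mu(\varepsilon):=\mu(\gamma(\varepsilon),f(\varepsilon))$, $u(\varepsilon):=u(\gamma(\varepsilon),f(\varepsilon))$ realises every object in the statement as a restriction of one $C^1$ map, with $\mu(0)=\lambda(0)=0$ and $u(0)=v(0)$. Differentiating $\partial_f\mathcal{F}(\gamma,0)v(\gamma)=\lambda(\gamma)Kv(\gamma)$ at $\gamma_*$ and applying $\psi$ (which kills $T\dot v(\gamma_*)$) leaves $\psi(\partial^2_{\gamma f}\mathcal{F}(\gamma_*,0)u_*)=\lambda'(\gamma_*)\psi(Ku_*)$; both the left-hand side (second simplicity hypothesis) and $\psi(Ku_*)$ ($K$--simplicity) are nonzero, so $\lambda'(\gamma_*)\neq0$.

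The heart of the matter is the asymptotic identity. I would compare the two branches of the single function $\mu(\cdot,\cdot)$ through
\[
\mu(\varepsilon)-\lambda(\gamma(\varepsilon))=\mu(\gamma(\varepsilon),f(\varepsilon))-\mu(\gamma(\varepsilon),0)=\int_0^1\partial_f\mu(\gamma(\varepsilon),tf(\varepsilon))[f(\varepsilon)]\,dt .
\]
Differentiating the defining relation of $\mu$ gives $\partial_f\mu(\gamma_*,0)[\phi]=\psi(\partial^2_{ff}\mathcal{F}(\gamma_*,0)[\phi,u_*])/\psi(Ku_*)$, so, since $f(\varepsilon)=\varepsilon u_*+o(\varepsilon)$ and $\partial_f\mu$ is continuous, the integral equals $\varepsilon\,\psi(\partial^2_{ff}\mathcal{F}(\gamma_*,0)[u_*,u_*])/\psi(Ku_*)+o(\varepsilon)$. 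The crucial scalar identity comes from $\mathcal{F}(\gamma(\varepsilon),f(\varepsilon))\equiv0$: expanding to second order in $\varepsilon$ and pairing with $\psi$, and using that all $\gamma$--derivatives of $\mathcal{F}(\cdot,0)$ vanish at $\gamma_*$ while $\psi$ annihilates $\im T$, one obtains $\psi(\partial^2_{ff}\mathcal{F}(\gamma_*,0)[u_*,u_*])=-2\gamma'(0)\psi(\partial^2_{\gamma f}\mathcal{F}(\gamma_*,0)u_*)=-2\gamma'(0)\lambda'(\gamma_*)\psi(Ku_*)$. Feeding this back, together with $\lambda(\gamma(\varepsilon))=\lambda'(\gamma_*)(\gamma(\varepsilon)-\gamma_*)+o(\gamma(\varepsilon)-\gamma_*)$ and the elementary comparison of $\gamma(\varepsilon)-\gamma_*=\int_0^\varepsilon\gamma'(s)\,ds$ with $\varepsilon\gamma'(\varepsilon)$, yields $\mu(\varepsilon)=-\varepsilon\gamma'(\varepsilon)\lambda'(\gamma_*)+o\big(|\varepsilon\gamma'(\varepsilon)|+|\mu(\varepsilon)|\big)$, which is the claimed limit.

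I expect this last step to be the main obstacle. Both $\mu(\varepsilon)$ and $-\varepsilon\gamma'(\varepsilon)\lambda'(\gamma_*)$ may vanish to higher than first order in $\varepsilon$ — this is exactly the case $\gamma'(0)=0$ relevant to the supercritical fingering branches, where $\gamma_l'(0)=0$ — so one must argue that each error term is of strictly lower order than whichever of the two principal quantities survives, and it is precisely for this reason that the limit is taken over $\varepsilon$ with $\mu(\varepsilon)\neq0$. The first two stages, by contrast, are routine once the Fredholm splitting of $T$ is in place, and $\lambda'(\gamma_*)\neq0$ is a one-line pairing computation.
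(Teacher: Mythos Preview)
The paper does not prove this theorem; it is quoted from Crandall and Rabinowitz \cite{CR3} and used as a black box in the proof of Theorem~\ref{T:instability}, so there is no proof in the paper to compare yours against.

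That said, your outline follows the standard route of \cite{CR3} and is correct through the construction of the bifurcation curve, the perturbed eigenvalue data $(\mu,u)$ and $(\lambda,v)$ via the implicit function theorem, and the pairing computation giving $\lambda'(\gamma_*)\neq0$. The gap is exactly where you locate it. Your derivation of the difference $\mu(\varepsilon)-\lambda(\gamma(\varepsilon))$ by expanding $\int_0^1\partial_f\mu(\gamma(\varepsilon),tf(\varepsilon))[f(\varepsilon)]\,dt$ only yields a remainder of size $o(\varepsilon)$, not $o(|\varepsilon\gamma'(\varepsilon)|+|\mu(\varepsilon)|)$: when $\gamma'(0)=0$ --- precisely the case the paper needs for $l=1$, cf.\ Theorem~\ref{T:P3} --- both $\mu(\varepsilon)$ and $\varepsilon\gamma'(\varepsilon)$ are $O(\varepsilon^2)$ while your error term is only $o(\varepsilon)$, so the conclusion does not follow. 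Sharpening the expansion to the next order would require $\partial_f^2\mu$, hence $\mathcal{F}\in C^3$, which is not assumed. The argument in \cite{CR3} avoids fixed-order Taylor expansion altogether: one applies $\psi$ to the exact eigenvalue relation along the curve and to the $\varepsilon$-derivative of $\mathcal{F}(\gamma(\varepsilon),f(\varepsilon))=0$, subtracts, and estimates the resulting differences of $C^1$ quantities directly; this produces a remainder that is genuinely small relative to $|\mu(\varepsilon)|+|\varepsilon\gamma'(\varepsilon)|$ under $C^2$ regularity alone. Your sketch is therefore correct in structure but does not close at the last step as written.
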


\begin{proof}[Proof of Theorem~{\rm \ref{T:instability}}]
In virtue of \eqref{eq:final} and \eqref{eq:final2} we have
\[
\sigma (\partial_f\Phi(\gamma,0))=\left\{\lambda_m(\gamma):=\frac{km\tanh(m)}{\mu_++\mu_-\tanh^2(m)}(\varpi-\gamma m^2)\,:\, 1\leq m\in\mathbb{N}\right\}.
\]
Since $\lambda_1(\overline\gamma_l)>0$ for all $l\geq 2$ we conclude that the spectrum of the linearisation
$\partial_f\Phi(\gamma_l(\varepsilon),f_l(\varepsilon))$ contains positive eigenvalues if $\varepsilon$ is small, and the case $l\geq2$ is proved.

If $l=1,$ then we find ourselves in the critical case when $0$ is the only non-negative point in the spectrum of 
$\partial_f\Phi(\overline\gamma_1,0),$ since $\lambda_1(\overline\gamma_1)=0.$ 
The proof relies now strongly on Theorem \ref{T:C-R}. 
Recall that by \eqref{eq:final}-\eqref{eq:final2} 
the derivative $\partial_f\Phi(\gamma,0)\in\mathcal{L}(h^{4+\alpha}_{0,e}(\mathbb{S}),h^{1+\alpha}_{0,e}(\mathbb{S})),$ is given by
\begin{equation*}
\partial_f\Phi(\gamma,0)\sum_{m=1}^\infty a_m\cos(mx)=\sum_{m=1}^\infty\lambda_m(\gamma)a_m\cos(mx),
\end{equation*}
for all $f=\sum_{m=1}^\infty a_m\cos(mx)\in h^{4+\alpha}_{0,e}(\mathbb{S})$, with $\lambda_m(\gamma)$ as above.
Setting $T=\partial_f\Phi(\overline\gamma_1,0)$ and $K$ to be the inclusion $h^{4+\alpha}_{0,e}(\mathbb{S})\hookrightarrow h^{1+\alpha}_{0,e}(\mathbb{S})$
we find all assumptions of Theorem \ref{T:C-R} fulfilled.
Moreover, in virtue of $\lambda'(\overline\gamma_1)<0,$ and since $\gamma_1'(\varepsilon)>0$ near $\varepsilon>0$ (respectively negative when $\varepsilon<0$) (cf. Theorem \ref{T:P3}), we conclude
 that the eigenvalue  $\mu(\varepsilon)$ of $\partial_f\Phi(\gamma_1(\varepsilon),f_1(\varepsilon))$ must have positive sign for $\varepsilon$ close to $0$.
 Hence, $f_1(\varepsilon)$ is an  unstable steady-state  of \eqref{eq:CPred} when $\gamma=\gamma_1(\varepsilon)$.
The proof is now completed.
\qed 
\end{proof}

\section{Appendix}
Let us now assume that both fluids have the same viscosity $\mu=\mu_\pm$.
We study the problem \eqref{eq:S}  in a  frame moving  with constant velocity $V>0,$ by using the results
already established in Theorem \ref{T:main} and Theorem \ref{T:main2}. 
More explicitly, in  a infinitely long vertical column consisting of two fluids we describe the motion of the interface separating the fluids and which is located between two parallel 
lines $\Gamma_\pm(t)$ which move  with constant velocity $V.$
We impose constant normal velocity $V$ on $\Gamma_+(t)$ and prescribe the pressure on $\Gamma_-(t).$   
This situation is similar to that studied in \cite{ST, SCH} where it is assumed that the velocity at $\pm \infty$ is equal to $(0,V).$

Consider now the global  solution $f$ of problem \eqref{eq:S} under constant boundary conditions $g_2=0$ and $g_1\equiv c\in\mathbb{R},$
with initial data $f_0\in h^{4+\alpha}_0(\mathbb{S}).$
We then define 
\begin{align*}
h(t)&:=f(t)+tV\in h^{4+\alpha}(\mathbb{S});\\[1ex]
\Omega_+(t,h)&:=\{(x,y)\,:\, h(t)<y<1+tV\};\\[1ex]
\Omega_-(t,h)&:=\{(x,y)\,:\, -1+tV<y<h(t)\};\\[1ex]
v_\pm(t,x,y)&:=u_{\pm}(x,y-tV)-\frac{\mu V}{k}y\pm\frac{\varpi V}{2}t, \qquad (x,y)\in\Omega_\pm(t,h).
\end{align*}
Setting $\Gamma_\pm(t):=\mathbb{S}\times\{\pm 1+tV\},$ we see that $(h,v_+,v_-) $ is the global solution of the Muskat problem in the moving frame:
\begin{equation}\label{eq:SV}
\left\{
\begin{array}{rllllll}
\Delta v_\pm&=&0&\text{in}& \Omega_\pm(t,h),\\[2ex]
\displaystyle -\frac{k}{\mu}\partial_y v_+&=&V&\text{on}& \Gamma_+(t),\\[2ex]
 v_-&=&\displaystyle C-\left(\frac{\mu V^2 }{k}+\frac{\varpi V}{2}\right) t&\text{on}&\Gamma_-{(t)},\\[2ex]
v_+-v_-&=&\gamma\kappa_{\Gamma(h)}+\varpi  h&\text{on}& \Gamma(h(t)),\\[2ex]
\displaystyle{\partial_th+\frac{k\sqrt{1+f'^2}}{\mu}\partial_\nu v_\pm}&=&0 &\text{on}& \Gamma(h(t)),\\[2ex]
h(0)&=&f_0, &
\end{array}
\right. 
\end{equation} 
with $t\in[0,\infty)$ and $C:=(c+\mu V)k.$  
Defining the velocity field $\vec v^V_\pm$ by Darcy's law 
$\vec v^V_\pm:=-(k/\mu)\nabla v_\pm$ in $\Omega_\pm(t,h),$
 the first boundary condition in \eqref{eq:SV} asserts that the normal velocity on $\Gamma_+(t)$ is constantly equal to $V$.
Complementary to \cite[Theorem 4.2]{SCH} (we treat here fluids with different densities and the same viscosity), our next result states not only global existence for small data but also 
exponentially fast convergence to a flat solution.
In virtue of Theorem \ref{T:main}, Theorem \ref{T:main2}, and Theorem \ref{T:P3} we obtain:
\begin{thm}\label{T:mFra} Let \eqref{eq:cond1} hold true and $C\in\mathbb{R}, V \geq0$ be fixed. 
Given $\omega\in (0,\tanh(1)(\gamma-\varpi)/(\mu(1+\tanh(1))))$, there exist positive constants $\delta, M$ such that for all $f_0\in h^{4+\alpha}_0(\mathbb{S})$ with $\|f_0\|_{h^{4+\alpha}(\mathbb{S})}\leq \delta,$
the unique solution to \eqref{eq:SV} exists in the large and 
\[
\|h(t)-tV\|_{h^{4+\alpha}(\mathbb{S})}+\|\partial_th(t)-V\|_{h^{1+\alpha}(\mathbb{S})}\leq Me^{-\omega t}\|f_0\|_{h^{4+\alpha}(\mathbb{S})}
\]
for all $t\geq 0.$
Moreover, if $\rho_+>\rho_-$ and $\gamma>0,$ there exist traveling finger shaped global solutions of \eqref{eq:SV}.
\end{thm}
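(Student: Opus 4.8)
The plan is to deduce Theorem~\ref{T:mFra} from the already-established results by means of the explicit change of variables introduced just above its statement. First I would verify that the map $(f,u_+,u_-)\mapsto(h,v_+,v_-)$ defined there really does transform solutions of \eqref{eq:S} (with $g_1\equiv c$, $g_2\equiv0$, $\mu_\pm=\mu$) into solutions of \eqref{eq:SV}. This is a direct computation: since $h(t)=f(t)+tV$ and the translation $y\mapsto y-tV$ carries $\Omega_\pm(t,h)$ onto $\Omega_\pm(f(t))$, harmonicity of $v_\pm$ follows from harmonicity of $u_\pm$ minus the affine (hence harmonic) term $\frac{\mu V}{k}y$; the Neumann datum on $\Gamma_+(t)$ comes from $-\frac{k}{\mu}\partial_y v_+=-\frac{k}{\mu}\partial_y u_+ + V = V$ since $\partial_yu_+=0$ on $\Gamma_1$; the Dirichlet datum on $\Gamma_-(t)$ is the value $c$ of $u_-$ shifted by $-\frac{\mu V}{k}(-1+tV)-\frac{\varpi V}{2}t$, which rearranges to $C-\bigl(\frac{\mu V^2}{k}+\frac{\varpi V}{2}\bigr)t$ with $C=(c+\mu V)k$; the Laplace--Young condition on $\Gamma(h(t))$ is preserved because the extra terms in $v_+$ and $v_-$ differ, on the interface $y=h(t)=f(t,x)+tV$, by $-\frac{\mu V}{k}(tV)+\frac{\varpi V}{2}t-\bigl(-\frac{\mu V}{k}(tV)-\frac{\varpi V}{2}t\bigr)=\varpi V t$, and this is exactly compensated by the replacement of $\varpi f$ by $\varpi h=\varpi f+\varpi Vt$ together with the fact that $\kappa_{\Gamma(h(t))}=\kappa_{\Gamma(f(t))}$; finally the kinematic condition is unchanged since $\partial_t h=\partial_t f+V$ and the normal derivative of $v_\pm$ equals that of $u_\pm$ minus $\frac{\mu V}{k}\langle\nu,(0,1)\rangle$, and one checks the constants match because $\langle(-f',1),(0,1)\rangle=1$. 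The reverse implication is the same computation read backwards, so \eqref{eq:SV} and \eqref{eq:S} are equivalent for this class of data.

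Next, with the equivalence in hand, I would simply invoke the results already proved. For existence and decay: apply Theorem~\ref{T:main2} to the equivalent autonomous problem \eqref{eq:auto} for $f$ with $\gamma>0$, $g_1\equiv c$, $g_2\equiv0$, $f_0\in h^{4+\alpha}_0(\mathbb{S})$ small. Condition \eqref{eq:cond1} is exactly the hypothesis we assume, so under that hypothesis the flat equilibrium $f_*\equiv0$ is exponentially stable: there exist $\delta,M>0$ so that for $\|f_0\|_{h^{4+\alpha}(\mathbb{S})}\le\delta$ the solution exists globally and $\|f(t)\|_{h^{4+\alpha}(\mathbb{S})}+\|\partial_t f(t)\|_{h^{1+\alpha}(\mathbb{S})}\le Me^{-\omega t}\|f_0\|_{h^{4+\alpha}(\mathbb{S})}$ for $\omega$ in the stated range (note that with $\mu_+=\mu_-=\mu$ the interval $(0,\tanh(1)k(\gamma-\varpi)/(\mu_++\mu_-\tanh(1)))$ from Theorem~\ref{T:main2} becomes $(0,\tanh(1)k(\gamma-\varpi)/(\mu(1+\tanh(1))))$; here one must be slightly careful, since Theorem~\ref{T:main2} has $\mu_-\tanh(1)$ in the denominator and the appendix statement has $\mu(1+\tanh(1))$ — I would double-check the constant, but the point is that it is the explicit decay rate inherited verbatim). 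Translating back via $h(t)=f(t)+tV$ gives $\|h(t)-tV\|_{h^{4+\alpha}(\mathbb{S})}=\|f(t)\|_{h^{4+\alpha}(\mathbb{S})}$ and $\|\partial_t h(t)-V\|_{h^{1+\alpha}(\mathbb{S})}=\|\partial_t f(t)\|_{h^{1+\alpha}(\mathbb{S})}$, which is precisely the claimed estimate.

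For the last sentence about traveling finger-shaped solutions when $\rho_+>\rho_-$ (i.e. $\varpi>0$) and $\gamma>0$: apply Theorem~\ref{T:P3}, which under $\varpi>0$ produces, for each $l\ge1$, a branch of nontrivial even stationary solutions $f_l(\varepsilon)$ of \eqref{eq:auto} near $(\overline\gamma_l,0)$. These are time-independent solutions $f$ of \eqref{eq:S} with $g_1$ constant, $g_2\equiv0$; feeding such an $f$ (which is not in general mean-zero only if we do not restrict, but in fact $f_l(\varepsilon)\in\mathcal{U}\subset C^{3+\alpha}_{0,e}$, so it is admissible as $f_0$ after a small-data truncation) into the change of variables yields $h(t)=f_l(\varepsilon)+tV$, a profile of fixed finger shape translating with constant velocity $V$ — a traveling-wave (finger) solution of \eqref{eq:SV}. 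The main obstacle I anticipate is not conceptual but bookkeeping: one must be careful that the extra affine and time-linear terms added to $v_\pm$ are genuinely harmonic and produce exactly the right inhomogeneities in all five equations of \eqref{eq:SV}, and that the decay constant $\omega$ really does specialize correctly from Theorem~\ref{T:main2}'s formula — the denominator discrepancy ($\mu_-\tanh(1)$ vs.\ $\mu(1+\tanh(1))$) should be resolved by re-deriving $\lambda_1$ from \eqref{eq:final2} at $c_2=0$, $\mu_\pm=\mu$, where $\lambda_1=\frac{k\tanh(1)}{\mu+\mu\tanh^2(1)}(\varpi-\gamma)=\frac{k\tanh(1)}{\mu(1+\tanh^2(1))}(\varpi-\gamma)$, so in fact the sharp rate is governed by $\tanh(1)k(\gamma-\varpi)/(\mu(1+\tanh^2(1)))$ and the stated $\mu(1+\tanh(1))$ is a (valid, since smaller) under-estimate — I would keep the paper's stated interval as it is a correct sufficient range. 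Everything else is an immediate transcription of Theorems~\ref{T:main}, \ref{T:main2}, and \ref{T:P3} through the explicit bijection.
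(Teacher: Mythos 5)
Your proof is correct and follows exactly the route the paper intends: the paper supplies no explicit proof, merely stating that Theorem~\ref{T:mFra} follows from Theorems~\ref{T:main}, \ref{T:main2}, and \ref{T:P3} via the displayed change of variables, and you have filled in precisely that verification (harmonicity of the added affine term, cancellation of the $\pm V$ contributions in the kinematic condition, the $\varpi V t$ bookkeeping in the Laplace--Young condition, and the specialization $\mu_\pm=\mu$, $c_2=0$ in the eigenvalue formula). Your side remarks on the constants are apt: with $\mu_\pm=\mu$ the denominator inherited from Theorem~\ref{T:main2} does give $\mu(1+\tanh(1))$ (a conservative under-estimate of the sharp $\mu(1+\tanh^2(1))$ coming from $\lambda_1$), and the factor $k$ has indeed been dropped from the numerator in the stated range, just as the definition $C=(c+\mu V)k$ does not match the actual trace $c+\mu V/k$ you compute --- these are typos in the paper, not gaps in your argument.
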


\begin{rem}
Though the normal velocity on $\Gamma_-(t)$ is not constant, in the limit we have $\langle\vec v^V_-|\nu\rangle\to_{t\to\infty} V$  exponentially fast if \eqref{eq:cond1} holds true.  
If $\gamma=0,$ relation \eqref{eq:cond1}  ensures that the more dense fluid  lies beneath.
\end{rem}

\subsection*{Acknowledgement}
The authors wish to thank the anonymous referees for their constructive suggestions and comments which have improved the quality of the paper..

\end{document}